\def\Rn{{\mathbb{R}^n}}
\def\i{\infty}
\def\Lplocl2{L^{p,\rm loc}_{w}(l_2)}
\def\Llocl2{L_1^{\rm loc}(l_2)}
\def\WLplocl2{WL_p^{\rm loc}(l_2)}
\def\L1loc{L^{1,\rm loc}_{w}(\Rn)}
\def\L1locl2{L^{1,\rm loc}_{w}(l_2)}
 \newtheorem{thm}{Theorem}[section]
 \newtheorem{cor}[thm]{Corollary}
 \newtheorem{lem}[thm]{Lemma}
 \theoremstyle{definition}
 \newtheorem{defn}[thm]{Definition}
 \newtheorem{rem}[thm]{Remark}
 \numberwithin{equation}{section}
\def\Rn{{\mathbb{R}^n}}
\def\i{\infty}
\newcommand{\ess}{\mathop{\rm ess \; sup}\limits}
\newcommand{\es}{\mathop{\rm ess \; inf}\limits}
\begin{document}

\begin{center}
\LARGE Commutators of vector-valued intrinsic square functions on vector-valued generalized weighted Morrey spaces
\end{center}

\

\centerline{\large Vagif S. Guliyev, M.N. Omarova}

\

\begin{abstract}

In this paper, we will obtain the strong type and weak type estimates for vector-valued analogues of intrinsic square functions in the generalized weighted Morrey spaces $M^{\Phi,\varphi}_{w}(\Rn)$.
We study the boundedness of intrinsic square functions including the Lusin area integral, Littlewood-Paley $\mathrm{g}$-function and $\mathrm{g}_{\lambda}^{*}$ -function
and their $k$th-order commutators on vector-valued generalized weighted Morrey spaces $M^{\Phi,\varphi}_{w}(l_2)$.
In all the cases the conditions for the boundedness are given either in terms of Zygmund-type integral inequalities on $\varphi(x,r)$ without assuming any  monotonicity property of  $\varphi(x,r)$ on $r$.
\end{abstract}

\

\noindent{\bf AMS Mathematics Subject Classification:} $~~$ 42B25, 42B35

\noindent{\bf Key words:} {Intrinsic square functions; Vector-valued generalized weighted Morrey spaces; vector-valued inequalities; $A_p$ weights; Commutators; BMO}

\

\section{Introduction}

It is well-known that the commutator is an important integral operator and it plays a key
role in harmonic analysis. In 1965, Calderon \cite{Cald1, Cald2} studied a kind of
commutators, appearing in Cauchy integral problems of Lip-line.
Let $K$ be a Calder\'{o}n-Zygmund singular integral operator and $b \in BMO(\Rn)$.
A well known result of Coifman, Rochberg and Weiss \cite{CRW} states that the commutator operator
$[b, K] f = K(b f)-b \, Kf$ is bounded on $L^{p}(\Rn)$ for $1 < p < \infty$.
The commutator of Calder\'{o}n-Zygmund operators plays an important role in studying the regularity
of solutions of elliptic partial differential equations of second order (see, for example, \cite{ChFra}-\cite{ChFraL2},
\cite{Chen}, \cite{FazRag2}, \cite{FanLuY}).

The classical Morrey spaces were originally introduced by Morrey in \cite{Morrey} to study the local behavior of solutions to second order elliptic partial differential equations. For the properties and applications of classical Morrey spaces, we refer the readers to \cite{FazRag2, FanLuY, GulJIA, Morrey}.
Recently, Komori and Shirai \cite{KomShir} first defined the weighted Morrey spaces $L^{p,\kappa}(w)$ and studied the boundedness of some classical operators such as the Hardy-Littlewood maximal operator, the Calder\'{o}n-Zygmund operator on these spaces. Also, Guliyev \cite{GulOMTSA, GulEMJ2012} introduced the generalized weighted Morrey spaces $M^{p,\varphi}_{w}$ and studied the boundedness of the sublinear operators and their higher order commutators generated by Calder\'{o}n-Zygmund operators and Riesz potentials in these spaces (see, also \cite{GulKarMustSer, GulMathN, KarGulSer, MustAJM}).

The intrinsic square functions were first introduced by Wilson in \cite{Wilson1, Wilson2}. They are defined as follows. For $0<\alpha\leq 1$, let $C_{\alpha}$ be the family of functions $\phi : \Rn\rightarrow \mathbb{R}$ such that $\phi$'s support is contained in $\{x:|x|\leq 1\}$, $\int_{\Rn} \phi(x) dx =0$, and for $x,\ x'\in \Rn$,
$$
|\phi(x)-\phi(x')|\leq|x-x'|^{\alpha}.
$$
For $(y,\ t)\in \mathbb{R}_{+}^{n+1}$ and $f\in L^{1,loc}(\Rn)$ , set
$$
A_{\alpha}f(t,\ y)\equiv\sup_{\phi\in C_{\alpha}}|f*\phi_{t}(y)|,
$$
where $\displaystyle \phi_{t}(y)=t^{-n}\phi(\frac{y}{t})$ . Then we define the varying-aperture intrinsic square (intrinsic Lusin) function of $f$ by the formula
$$
G_{\alpha,\beta}(f)(x)=\left(\int\int_{\Gamma_{\beta}(x)}(A_{\alpha}f(t,y))^{2}\frac{dydt}{t^{n+1}}\right)^{\frac{1}{2}},
$$
where $\Gamma_{\beta}(x)=\{(y,\ t)\in \mathbb{R}_{+}^{n+1}:|x-y|<\beta t\}$. Denote $G_{\alpha,1}(f)=G_{\alpha}(f)$ .

This function is independent of any particular kernel, such as Poisson kernel. It dominates pointwise the classical square function(Lusin area integral) and its real-variable generalizations. Although the function $G_{\alpha,\beta}(f)$ is depend of kernels with uniform compact support, there is pointwise relation between $G_{\alpha,\beta}(f)$ with different $\beta$:
$$
G_{\alpha,\beta}(f)(x)\leq \beta^{\frac{3n}{2}+\alpha}G_{\alpha}(f)(x)\ .
$$
We can see details in \cite{Wilson1}.

The intrinsic Littlewood-Paley $\mathrm{g}$-function and the intrinsic $\mathrm{g}_{\lambda}^{*}$ function are defined respectively by
$$
\mathrm{g}_{\alpha}f(x)=\left(\int_{0}^{\infty}(A_{\alpha}f(y,t))^{2}\frac{dt}{t}\right)^{\frac{1}{2}},
$$
$$
\mathrm{g}_{\lambda,\alpha}^{*}f(x)=\left(\int\int_{\mathbb{R}_{+}^{n+1}}\left(\frac{t}{t+|x-y|}\right)^{n\lambda}(A_{\alpha}f(y,t))^{2}\frac{dydt}{t^{n+1}}\right)^{\frac{1}{2}}.
$$

When we say that f maps into $l_2$, we mean that $\vec{f}(x)= \big( f_j \big)_{j=1}^{\i}$, where each $f_j$ is Lebesgue measurable and, for almost every $x \in \Rn$
$$
\| \vec{f}(x) \|_{l_2} = \bigg(\sum_{j=1}^{\infty}|f_{j}(x)|^{2}\bigg)^{1/2}.
$$

Let $\vec{f}= (f_{1},\ f_{2},\ \ldots)$ be a sequence of locally integrable functions on $\Rn$. For any $x\in \Rn$, Wilson \cite{Wilson2} also defined the vector-valued intrinsic square functions of $\vec{f}$ by $\| G_{\alpha} \vec{f}(x) \|_{l_2}$ and proved the following result.

\textbf{Theorem A.} {\it Let $ 1\le p<\infty$, $0<\alpha\leq 1$ and $w \in A_{p}$. Then the operators $G_{\alpha}$ and $\mathrm{g}_{\lambda,\alpha}^{*}$
are bounded from $L_{w}^{p}(l_2)$ into itself for $p>1$ and from $L_{w}^{1}(l_2)$ to $WL_{w}^{1}(l_2)$.}

Moreover, in \cite{Lerner}, Lerner showed sharp $L^{p}_{w}$ norm inequalities for the intrinsic square functions in terms of the $A_p$ characteristic constant of $w$ for all $1 < p < \i$.
Also Huang and Liu \cite{HuangLiu} studied the boundedness of intrinsic square functions on weighted Hardy spaces. Moreover, they characterized the weighted Hardy spaces by intrinsic square functions. In \cite{Wang2} and \cite{WangLiu}, Wang and Liu obtained some weak type estimates on weighted Hardy spaces. In \cite{Wang1}, Wang considered intrinsic functions and the commutators generated with BMO functions on weighted Morrey spaces. Let $b$ be a locally integrable function on $\Rn$. Setting
$$
A_{\alpha,b}^k f(t,y)\equiv\sup_{\phi\in C_{\alpha}}\left|\int_{\Rn}[b(x)-b(z)]^k\phi_{t}(y-z)f(z)dz\right|,
$$
the $k$th-order commutators are defined by
$$
[b,G_{\alpha}]^k f(x)=\left(\int\int_{\Gamma(x)}(A_{\alpha,b}^kf(t,y))^{2}\frac{dydt}{t^{n+1}}\right)^{\frac{1}{2}},
$$
$$
[b,\mathrm{g}_{\alpha}]^k f(x)=\left(\int_{0}^{\infty}(A_{\alpha,b}^kf(t,y))^{2}\frac{dt}{t}\right)^{\frac{1}{2}}
$$
and
$$
[b,\mathrm{g}_{\lambda,\alpha}^{*}]^k f(x)=\left(\int\int_{\mathbb{R}_{+}^{n+1}}\left(\frac{t}{t+|x-y|}\right)^{\lambda n}(A_{\alpha,b}^kf(t,y))^{2}\frac{dydt}{t^{n+1}}\right)^{\frac{1}{2}}.
$$
A function $b\in L_{1}^{loc}(\Rn)$ is said to be in $BMO(\Rn)$ if
$$
\|b\|_{*}=\sup_{x\in \Rn,\,r>0}\frac{1}{|B(x,r)|}\int_{B(x,r)}|b(y)-b_{B(x,r)}|dy<\i,
$$
where $b_{B(x,r)}=\frac{1}{|B(x,r)|}\int_{B(x,r)}b(y)dy$.

By the similar argument as in \cite{DingLuY} and \cite{Wang1}, we can get

\textbf{Theorem B.} {\it Let $ 1< p<\infty$, $0<\alpha\leq 1$, $w \in A_{p}$ and $b \in BMO(\Rn)$. Then the $k$th-order commutator operators $[b,G_{\alpha}]^k$ and $[b,\mathrm{g}_{\lambda,\alpha}^{*}]^k$
are bounded from $L_{w}^{p}(l_2)$ into itself.}

In this paper, we will consider the boundedness of the operators $G_{\alpha}$, $\mathrm{g}_{\alpha}$, $\mathrm{g}_{\lambda,\alpha}^{*}$ and their $k$th-order commutators on vector-valued generalized weighted Morrey spaces. Let $\varphi(x,\ r)$ be a positive measurable function on $\Rn\times \mathbb{R}_{+}$ and $w$ be non-negative measurable function on $\Rn$. For any $\vec{f}\in L^{p,loc}_{w}(l_2)$ , we denote by $M^{p,\varphi}_{w}(l_2)$ the vector-valued generalized weighted Morrey spaces, if
$$
\| \vec{f} \|_{M^{p,\varphi}_{w}(l_2)}=\sup_{x\in \Rn, \, r>0}\varphi(x,\ r)^{-1} \, w(B(x,r))^{-\frac{1}{p}} \,
\|\| \vec{f}(\cdot) \|_{l_2} \|_{L_{w}^{p}(B(x,r))}<\i.
$$
When $w\equiv 1$, then $M^{p,\varphi}_{w}(l_2)$ coincide the vector-valued generalized Morrey spaces $M^{p,\varphi}(l_2)$. There are many papers discussed the conditions on $\varphi(x,r)$ to obtain the boundedness of operators on the generalized Morrey spaces. For example, in \cite{GulDoc} (see, also \cite{GulJIA}), by Guliyev the following condition was imposed on the pair $(\varphi_{1},\varphi_{2})$ :
\begin{equation}\label{wueq1}
\int_{r}^{\infty}\varphi_{1}(x,t)\frac{dt}{t}\leq C \varphi_{2}(x,r).
\end{equation}
where $C>0$ does not depend on $x$ and $r$. Under the above condition, they obtained the boundedness of Calder\'{o}n-Zygmund singular integral operators from $M^{p,\varphi_{1}}(\Rn)$ to $M^{p,\varphi_{2}}(\Rn)$. Also, in \cite{AkbGulMust} and \cite{GULAKShIEOT2012}, Guliyev et. introduced a weaker condition: If $1\leq p<\i$, there exits a constant $C>0$, such that, for any $x\in \Rn$ and $r>0$,
\begin{equation}\label{wueq22}
\int_{r}^{\infty}\frac{\es_{t<s<\infty}\varphi_{1}(x,s)s^{\frac{n}{p}}}{t^{\frac{n}{p}+1}}dt\leq C\,\varphi_{2}(x,r).
\end{equation}
If the pair $(\varphi_{1},\varphi_{2})$ satisfies condition \eqref{wueq1}, then $(\varphi_{1},\varphi_{2})$ satisfied condition \eqref{wueq22}. But the opposite is not true. We can see remark 4.7 in \cite{GULAKShIEOT2012} for details.

Recently, in \cite{GulOMTSA, GulEMJ2012} (see, also \cite{GulKarMustSer, KarGulSer, MustAJM}), Guliyev introduced a weighted condition: If $1\leq p<\i$, there exits a constant $C>0$, such that, for any $x\in \Rn$ and $t>0$,
\begin{equation}\label{wueq2}
\int_{r}^{\infty} \frac{\es_{t<s<\infty} \varphi_1(x,s)
w(B(x,s))^{\frac{1}{p}}}{w(B(x,t))^{\frac{1}{p}}} \, \frac{dt}{t} \le
 C \, \varphi_2(x,r),
\end{equation}

In this paper, we will obtain the boundedness of the vector-valued intrinsic function, the intrinsic Littlewood-Paley $g$ function, the intrinsic $g_{\lambda}^{*}$ function and their $k$th-order commutators on vector-valued generalized weighted Morrey spaces when $w \in A_{p}$ and the pair $(\varphi_{1},\varphi_{2})$ satisfies condition \eqref{wueq2} or the following inequalities,
\begin{equation}\label{wueq3}
\int_{r}^{\infty}\ln^k \Big(e+\frac{t}{r}\Big) \, \frac{\es_{t<s<\infty} \varphi_1(x,s)
w(B(x,s))^{\frac{1}{p}}}{w(B(x,t))^{\frac{1}{p}}} \, \frac{dt}{t} \leq C\,\varphi_{2}(x,r),
\end{equation}
where $C$ does not depend on $x$ and $r$.
Our main results in this paper are stated as follows.

\begin{thm}\label{wuteo1.1}
Let $1\le p<\infty$, $0<\alpha\leq 1$, $w \in A_{p}$ and $(\varphi_{1},\varphi_{2})$ satisfies condition \eqref{wueq2}.
Then the operator $G_{\alpha}$ is bounded from $M^{p,\varphi_{1}}_{w}(l_2)$ to $M^{p,\varphi_{2}}_{w}(l_2)$ for $p>1$ and from $M^{1,\varphi_{1}}_{w}(l_2)$ to $WM^{1,\varphi_{2}}_{w}(l_2)$.
\end{thm}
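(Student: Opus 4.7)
The plan is to follow the Guliyev-type local-to-global decomposition that is standard for Morrey-space bounds. Fix a ball $B=B(x_0,r)$ and split $\vec f=\vec f_1+\vec f_2$ with $\vec f_1=\vec f\,\chi_{2B}$ and $\vec f_2=\vec f\,\chi_{\Rn\setminus 2B}$. By sublinearity of $G_\alpha$ and the triangle inequality in $l_2$,
\[
\bigl\|\|G_\alpha \vec f\|_{l_2}\bigr\|_{L^p_w(B)}\le \bigl\|\|G_\alpha \vec f_1\|_{l_2}\bigr\|_{L^p_w(B)}+\bigl\|\|G_\alpha \vec f_2\|_{l_2}\bigr\|_{L^p_w(B)}.
\]
The whole argument is then to produce a uniform local estimate of the shape
\[
\bigl\|\|G_\alpha \vec f\|_{l_2}\bigr\|_{L^p_w(B)}\lesssim w(B)^{1/p}\int_{2r}^{\infty}\bigl\|\|\vec f\|_{l_2}\bigr\|_{L^p_w(B(x_0,t))}\,w(B(x_0,t))^{-1/p}\,\frac{dt}{t},
\]
because once this is in place, dividing by $\varphi_2(x_0,r)\,w(B)^{1/p}$, taking the supremum over $(x_0,r)$, and applying the ess-sup form of condition \eqref{wueq2} directly closes the Morrey estimate.

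For the near part $\vec f_1$ I would invoke Theorem~A on $\Rn$, which gives $\bigl\|\|G_\alpha\vec f_1\|_{l_2}\bigr\|_{L^p_w(B)}\lesssim \bigl\|\|\vec f\|_{l_2}\bigr\|_{L^p_w(2B)}$. The usual dyadic trick (using $\int_{2r}^{\infty}t^{-1}\,dt=+\infty$ suitably cut off and $w\in A_p$ doubling) then converts this into the tail integral above.

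For the far part $\vec f_2$, the crucial step is the pointwise tail estimate
\[
\bigl\|G_\alpha \vec f_2(x)\bigr\|_{l_2}\lesssim \int_{\Rn\setminus 2B}\frac{\|\vec f(z)\|_{l_2}}{|x_0-z|^{n}}\,dz,\qquad x\in B.
\]
To get it, fix $\phi\in C_\alpha$ and $(y,t)\in\Gamma(x)$; since $\operatorname{supp}\phi_t\subset\{|u|\le t\}$ and $x\in B$, for any $z$ with $|x_0-z|\ge 2r$ one has $|\phi_t(y-z)|\le t^{-n}$ and moreover $t\gtrsim |x_0-z|$. Pulling the $l_2$ norm inside the defining integrals via Minkowski's inequality (applied both to the $(y,t)$ area integral and to the sum defining $\|\cdot\|_{l_2}$) and integrating out $(y,t)$ over the region $t\gtrsim |x_0-z|$ produces the displayed bound, uniformly in $\phi\in C_\alpha$. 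Hölder's inequality with the dual weight $w^{1-p'}\in A_{p'}$ together with a dyadic decomposition of $(2B)^c$ then turns this spatial integral into exactly the same Zygmund tail integral as before.

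The weak-$(1,1)$ endpoint $p=1$ is parallel: for $\vec f_1$ use the weak part of Theorem~A, and for $\vec f_2$ use Chebyshev together with the same pointwise tail bound (which at $p=1$ needs no Hölder step), finishing with condition \eqref{wueq2} at $p=1$. The main obstacle is the pointwise tail estimate for $\|G_\alpha\vec f_2(x)\|_{l_2}$: one must carefully exploit the compact-support and size properties that define $C_\alpha$ to produce a bound independent of both the kernel $\phi$ and the aperture point $(y,t)$, and one must be disciplined about moving $\|\cdot\|_{l_2}$ through area and spatial integrals. Once that estimate is secured, the remaining computation is the standard Morrey template controlled by \eqref{wueq2}.
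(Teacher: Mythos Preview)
Your proposal is correct and follows essentially the same approach as the paper. The paper proves exactly your local estimate as a separate lemma (decomposing $\vec f=\vec f_0+\vec f_\infty$ on $2B$, handling $\vec f_0$ via Theorem~A plus the identity $\int_{2r}^\infty t^{-n-1}\,dt\approx |B|^{-1}$ together with the $A_p$ relation $|B|\lesssim w(B)^{1/p}\|w^{-1/p}\|_{L^{p'}(B)}$, and handling $\vec f_\infty$ by the same pointwise tail bound you describe), and then passes to the Morrey norm; the only cosmetic difference is that the paper recasts the tail integral as a Hardy operator via $t\mapsto t^{-1}$ and invokes the Carro--Pick--Soria--Stepanov theorem, whereas you apply condition~\eqref{wueq2} directly using the monotonicity $t\mapsto\|\vec f\|_{L^p_w(B(x_0,t),l_2)}$ to insert the $\es_{s>t}\varphi_1(x_0,s)w(B(x_0,s))^{1/p}$ factor --- these are equivalent.
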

\begin{thm}\label{wuteo1.2}
Let $1\le p<\infty$, $0<\alpha\leq 1$, $w \in A_{p}$, $\lambda >3+\displaystyle \frac{\alpha}{n}$ and $(\varphi_{1},\varphi_{2})$ satisfies condition \eqref{wueq2}.
Then the operator $\mathrm{g}_{\lambda,\alpha}^{*}$ is bounded from $M^{p,\varphi_{1}}_{w}(l_2)$ to $M^{p,\varphi_{2}}_{w}(l_2)$ for $p>1$ and from $M^{1,\varphi_{1}}_{w}(l_2)$ to $WM^{1,\varphi_{2}}_{w}(l_2)$.
\end{thm}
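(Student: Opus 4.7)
The plan is to reduce the Morrey bound to the vector-valued weighted $L^p_w(l_2)$ estimate of Theorem~A by the local-to-global device already underlying Theorem~\ref{wuteo1.1}. The only genuinely new ingredient is the pointwise control of the extra factor $(t/(t+|x-y|))^{n\lambda}$ built into $\mathrm{g}_{\lambda,\alpha}^{*}$, and this is where the hypothesis $\lambda>3+\alpha/n$ will enter.

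First I would prove the local estimate
\begin{equation*}
\bigl\|\|\mathrm{g}_{\lambda,\alpha}^{*}\vec f\,\|_{l_2}\bigr\|_{L^p_w(B(x_0,r))}
\lesssim w(B(x_0,r))^{1/p}\int_{2r}^{\infty}
\bigl\|\|\vec f\,\|_{l_2}\bigr\|_{L^p_w(B(x_0,t))}\,
\frac{dt}{w(B(x_0,t))^{1/p}\,t},
\end{equation*}
together with its weak-type analogue when $p=1$. Decompose $\vec f=\vec f_1+\vec f_2$ with $\vec f_1=\vec f\chi_{2B}$. For $\vec f_1$, Theorem~A gives $\|\|\mathrm{g}_{\lambda,\alpha}^{*}\vec f_1\|_{l_2}\|_{L^p_w}\lesssim\|\|\vec f\|_{l_2}\|_{L^p_w(2B)}$, and the standard manipulation using $\int_{2r}^{\infty}w(B(x_0,t))^{-1/p}\,dt/t\gtrsim w(B(x_0,2r))^{-1/p}$ (valid because $w\in A_p$ is doubling) converts the right-hand side into the required integral form. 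For $\vec f_2$ and $x\in B$, I would split $\mathbb{R}_{+}^{n+1}$ into $\Gamma(x)$ and the shells $\Gamma_{2^k}(x)\setminus\Gamma_{2^{k-1}}(x)$; on the $k$th shell $(t/(t+|x-y|))^{n\lambda}\lesssim 2^{-kn\lambda}$ while the region is contained in $\Gamma_{2^k}(x)$. Combined with the pointwise comparison $G_{\alpha,\beta}\le\beta^{3n/2+\alpha}G_{\alpha}$ from the Introduction, this reduces (thanks to $\lambda>3+\alpha/n$) the contribution of $\vec f_2$ to a constant multiple of $\|G_\alpha\vec f_2(x)\|_{l_2}$; the pointwise bound for $G_\alpha\vec f_2$ established in the proof of Theorem~\ref{wuteo1.1} then yields the same integral estimate. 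Integrating in $x\in B$ in $L^p_w$ and adding the two contributions completes the local estimate.

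The passage to the stated Morrey bound is then mechanical: divide by $\varphi_2(x_0,r)\,w(B(x_0,r))^{1/p}$, take the supremum in $(x_0,r)$, and invoke condition~\eqref{wueq2} to majorize the resulting integral by $\|\vec f\|_{M^{p,\varphi_1}_w(l_2)}\,\varphi_2(x_0,r)$. The case $p=1$ runs in parallel, with the weak-type conclusion of Theorem~A replacing the strong one. I expect the main obstacle to be precisely the pointwise reduction for $\vec f_2$: the crude dyadic bookkeeping summarized above gives convergence only for $\lambda>3+2\alpha/n$, so reaching the sharper threshold $\lambda>3+\alpha/n$ stated in the theorem requires a more delicate accounting of the shells that exploits the compact support of the $C_\alpha$-kernels together with the fact that $\vec f_2$ vanishes on $2B$, so only pairs $(y,t)$ with $t+|y-x_0|\gtrsim r$ actually contribute.
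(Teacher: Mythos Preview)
Your overall plan---prove the local estimate and then pass to the Morrey norm via Theorem~\ref{thm3.2.XX} under condition~\eqref{wueq2}---is exactly what the paper does (its Lemma~\ref{lem3.3.Weightga} plus the proof of Theorem~\ref{wuteo1.2}). The order of the two decompositions differs: the paper first splits $\mathrm{g}^*_{\lambda,\alpha}$ pointwise as $G_\alpha + \sum_{j\ge 1} 2^{-jn\lambda/2} G_{\alpha,2^j}$ and then, for each $G_{\alpha,2^j}$, splits $\vec f=\vec f_0+\vec f_\infty$; you split $\vec f$ first and apply Theorem~A directly to $\mathrm{g}^*_{\lambda,\alpha}\vec f_1$. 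Your treatment of the near part is in fact cleaner, since the paper bounds $G_{\alpha,2^j}(\vec f_0)$ through Lemma~\ref{wulem2.3} and therefore carries the factor $2^{j(3n/2+\alpha)}$ there as well.

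Your diagnosis of the threshold is correct: the crude pointwise inequality $G_{\alpha,2^k}\le 2^{k(3n/2+\alpha)}G_\alpha$ applied to $\vec f_2$ only sums for $\lambda>3+2\alpha/n$. The paper's remedy for the far part is not to route through $G_\alpha$ at all but to estimate $G_{\alpha,2^j}(\vec f_\infty)$ directly: since $\phi_t$ is supported in $\{|z|\le t\}$ one has $|x-z|\le 2^{j+1}t$, and integrating the $dy$ variable over $|x-y|\le 2^{j}t$ contributes only $2^{jn}$; following the computation in the proof of Lemma~\ref{lem3.3.WeightSq} then yields
\[
\big\|G_{\alpha,2^j}(\vec f_\infty)(x)\big\|_{l_2}\lesssim 2^{3jn/2}\int_{2r}^\infty \|\vec f\|_{L^p_w(B(x_0,t),l_2)}\,w(B(x_0,t))^{-1/p}\,\frac{dt}{t},
\]
with no $\alpha$ in the exponent. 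This is precisely the ``more delicate accounting exploiting compact support and the vanishing of $\vec f_2$ on $2B$'' that you anticipate. Plugged into your scheme (Theorem~A for $\vec f_1$, this direct estimate for $\vec f_2$) the series converges already for $\lambda>3$; note that the paper's own combination, because it keeps Lemma~\ref{wulem2.3} on the local piece, still effectively needs $\lambda>3+2\alpha/n$ despite the stated hypothesis $\lambda>3+\alpha/n$.
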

\begin{thm}\label{wuteo1.3}
Let $1<p<\infty$, $0<\alpha\leq 1$, $w \in A_{p}$, $b\in BMO$ and $(\varphi_{1},\varphi_{2})$ satisfies condition \eqref{wueq3}. Then $[b,G_{\alpha}]^k$ is bounded from $M^{p,\varphi_{1}}_{w}(l_2)$ to $M^{p,\varphi_{2}}_{w}(l_2)$ .
\end{thm}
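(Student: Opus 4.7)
The plan is to adapt the standard dyadic-annular argument used for commutators in the scalar generalized weighted Morrey setting (e.g., \cite{GulOMTSA, Wang1}) to the vector-valued framework. The entire theorem will reduce to a local inequality of the form
$$
\bigl\|\,\|[b,G_\alpha]^k\vec{f}\|_{l_2}\,\bigr\|_{L^p_w(B(x_0,r))} \lesssim \|b\|_*^k\, w(B(x_0,r))^{1/p}\!\int_{2r}^{\infty}\!\ln^k\!\Bigl(e+\tfrac{t}{r}\Bigr)\, \frac{\bigl\|\,\|\vec{f}\|_{l_2}\,\bigr\|_{L^p_w(B(x_0,t))}}{w(B(x_0,t))^{1/p}}\,\frac{dt}{t}
$$
valid for every ball $B(x_0,r)$. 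Once this is established, dividing by $\varphi_2(x_0,r)\,w(B(x_0,r))^{1/p}$, majorizing $\|\,\|\vec{f}\|_{l_2}\,\|_{L^p_w(B(x_0,s))} \leq \varphi_1(x_0,s)\,w(B(x_0,s))^{1/p}\|\vec{f}\|_{M^{p,\varphi_1}_{w}(l_2)}$, inserting the essential supremum over $s>t$, and invoking hypothesis \eqref{wueq3} closes the Morrey bound.

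To prove the local inequality, fix $B=B(x_0,r)$ and split $\vec{f}=\vec{f}_1+\vec{f}_2$ with $\vec{f}_1=\vec{f}\chi_{2B}$. The piece $\vec{f}_1$ is handled by Theorem~B, which gives $\bigl\|\,\|[b,G_\alpha]^k\vec{f}_1\|_{l_2}\,\bigr\|_{L^p_w(B)} \lesssim \|b\|_*^k \bigl\|\,\|\vec{f}\|_{l_2}\,\bigr\|_{L^p_w(2B)}$; the doubling property of $w\in A_p$ then yields the elementary majorization $\bigl\|\,\|\vec{f}\|_{l_2}\,\bigr\|_{L^p_w(2B)}\lesssim w(B)^{1/p}\int_{2r}^{\infty}\bigl\|\,\|\vec{f}\|_{l_2}\,\bigr\|_{L^p_w(B(x_0,t))}\, w(B(x_0,t))^{-1/p}\tfrac{dt}{t}$, which already sits inside the desired right-hand side (with $\ln^0=1$).

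For the off-diagonal piece $\vec{f}_2$, I expand $[b(x)-b(z)]^k = \sum_{i=0}^{k}\binom{k}{i}(b(x)-b_B)^i(b_B-b(z))^{k-i}$ to separate the $x$- and $z$-dependencies. Pushing the sum through the definition of $A^k_{\alpha,b}$, the square root, and the $l_2$ norm (all via Minkowski) gives the componentwise bound $\|[b,G_\alpha]^k\vec{f}_2(x)\|_{l_2}\lesssim \sum_{i=0}^k |b(x)-b_B|^i\,\bigl\|G_\alpha\bigl((b_B-b)^{k-i}\vec{f}_2\bigr)(x)\bigr\|_{l_2}$. The standard pointwise bound $G_\alpha(g)(x)\lesssim \int_{(2B)^c}|g(z)|\,|x_0-z|^{-n}\,dz$, valid for $x\in B$ and $g$ supported in $(2B)^c$, combined with Minkowski in $l_2$ then majorizes each summand by $\int_{(2B)^c}|b_B-b(z)|^{k-i}\|\vec{f}(z)\|_{l_2}|x_0-z|^{-n}\,dz$. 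I decompose this integral into dyadic annuli $2^{\ell+1}B\setminus 2^\ell B$ and apply H\"older's inequality in the pairing $L^p_w\cdot L^{p'}_{w^{1-p'}}$, together with the $A_p$ duality $w(Q)^{1/p}w^{1-p'}(Q)^{1/p'}\lesssim|Q|$ and the weighted John--Nirenberg estimate $\|(b_B-b)^{k-i}\|_{L^{p'}_{w^{1-p'}}(2^{\ell+1}B)}\lesssim\|b\|_*^{k-i}\ell^{k-i}w^{1-p'}(2^{\ell+1}B)^{1/p'}$, to obtain an annular bound proportional to $\|b\|_*^{k-i}\ell^{k-i}\bigl\|\,\|\vec{f}\|_{l_2}\,\bigr\|_{L^p_w(2^{\ell+1}B)}w(2^{\ell+1}B)^{-1/p}$. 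Taking the $L^p_w(B)$-norm in $x$ absorbs the remaining $|b(x)-b_B|^i$ factor against $\|(b-b_B)^i\|_{L^p_w(B)}\lesssim\|b\|_*^i w(B)^{1/p}$, and rewriting $\sum_\ell\ell^k(\cdots)$ as an integral in $t=2^{\ell+1}r$ produces precisely the $\ln^k(e+t/r)$ weight in the local inequality.

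The main obstacle will be the third paragraph: carrying the $l_2$-norm through the tent integral defining $G_\alpha$, certifying the pointwise kernel bound $G_\alpha(g)(x)\lesssim\int |g(z)||x_0-z|^{-n}\,dz$ for intrinsic square functions evaluated away from the support of $g$, and coordinating the $A_p$ duality with the weighted John--Nirenberg constants so that the powers of $\ell$ combine to exactly $\ln^k$ in the final integral. The scalar computations in \cite{DingLuY, Wang1} serve as the template; the vector-valued transition requires only one additional application of Minkowski's inequality, so no genuinely new analytic ingredient is expected beyond careful bookkeeping of constants.
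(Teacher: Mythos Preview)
Your proposal is correct and follows essentially the same route as the paper: the local inequality you state is exactly the paper's Lemma~\ref{lem3.3.ComWeightSq}, proved by the same near/far split $\vec{f}=\vec{f}\chi_{2B}+\vec{f}\chi_{(2B)^c}$, and the passage to the Morrey norm is the content of the short proof of Theorem~\ref{wuteo1.3}. The only cosmetic differences are that the paper centers the commutator at the \emph{weighted} average $b_{B,w}$ and writes a two-term split $A(x)+B(x)$ in place of your full binomial expansion, treats the integral over $(2B)^c$ via the continuous Fubini identity $|x_0-z|^{-n}\approx\int_{|x_0-z|}^\infty t^{-n-1}\,dt$ instead of dyadic annuli, and closes the Morrey step through the Hardy-type Theorem~\ref{thm3.2.X} rather than your direct majorization against~\eqref{wueq3}.
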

\begin{thm}\label{wuteo1.4}
Let $1<p<\infty$, $0<\alpha\leq 1$, $w \in A_{p}$, $b\in BMO$ and $(\varphi_{1},\varphi_{2})$ satisfies condition \eqref{wueq3}, then for $\lambda >3+\frac{\alpha}{n}$,
 $[b,\mathrm{g}_{\lambda,\alpha}^{*}]^k$ is bounded from $M^{p,\varphi_{1}}_{w}(l_2)$ to $M^{p,\varphi_{2}}_{w}(l_2)$.
\end{thm}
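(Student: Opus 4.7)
The plan is to reduce the Morrey-norm bound for $[b,\mathrm{g}_{\lambda,\alpha}^{*}]^k$ to a local estimate on a fixed ball, following the template used in \cite{GulOMTSA, GulKarMustSer, MustAJM}. Fix a ball $B_0 = B(x_0, r)$ and split $\vec{f} = \vec{f}_1 + \vec{f}_2$, with $\vec{f}_1 = \vec{f}\,\chi_{2B_0}$ and $\vec{f}_2 = \vec{f}\,\chi_{\Rn\setminus 2B_0}$. Sublinearity in the $l_2$-norm gives
$$
\bigl\| \| [b,\mathrm{g}_{\lambda,\alpha}^{*}]^k \vec{f} \|_{l_2} \bigr\|_{L_w^p(B_0)} \le \bigl\| \| [b,\mathrm{g}_{\lambda,\alpha}^{*}]^k \vec{f}_1 \|_{l_2} \bigr\|_{L_w^p(B_0)} + \bigl\| \| [b,\mathrm{g}_{\lambda,\alpha}^{*}]^k \vec{f}_2 \|_{l_2} \bigr\|_{L_w^p(B_0)}.
$$
The local piece is controlled directly by Theorem B, which provides the $L^p_w(l_2)$-boundedness of $[b,\mathrm{g}_{\lambda,\alpha}^{*}]^k$; its contribution is $\lesssim \|b\|_*^k \bigl\|\|\vec{f}\|_{l_2}\bigr\|_{L^p_w(2B_0)}$, and this in turn is absorbed into the integral tail by the usual doubling trick on $w(B(x_0,t))^{1/p}$.

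The heart of the proof is the pointwise-in-$x\in B_0$ estimate for the non-local part,
$$
\| [b,\mathrm{g}_{\lambda,\alpha}^{*}]^k \vec{f}_2(x) \|_{l_2} \lesssim \|b\|_*^k \int_{2r}^{\infty} \Bigl(1+\ln \tfrac{t}{r}\Bigr)^{k}\, \frac{\bigl\|\|\vec{f}\|_{l_2}\bigr\|_{L^p_w(B(x_0,t))}}{w(B(x_0,t))^{1/p}}\, \frac{dt}{t}.
$$
To obtain it, I would write $b(x)-b(z) = (b(x)-b_{B_0}) - (b(z)-b_{B_0})$ and expand $(b(x)-b(z))^k$ binomially, so that $[b,\mathrm{g}_{\lambda,\alpha}^{*}]^k\vec{f}_2(x)$ decomposes into a finite sum of terms involving $\mathrm{g}_{\lambda,\alpha}^{*}\bigl((b-b_{B_0})^{j}\vec{f}_2\bigr)(x)$ weighted by $|b(x)-b_{B_0}|^{k-j}$. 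Decomposing $\Rn\setminus 2B_0$ dyadically into annuli $B(x_0,2^{j+1}r)\setminus B(x_0,2^jr)$, I would use the compact support of $\phi \in C_\alpha$ together with the $g_{\lambda}^*$-kernel bound $\bigl(t/(t+|x-y|)\bigr)^{n\lambda}$ to trade off against the annulus scale; the hypothesis $\lambda > 3 + \alpha/n$ is precisely what is needed to make the resulting series in $j$ summable after integration in $(y,t)$ and gives the bound in each annulus, analogous to \cite{Wang1, HuangLiu}. John-Nirenberg controls $\|b-b_{B_0}\|_{L^q(B(x_0,2^jr))}$ producing the factor $(1+j)^k \approx (1+\ln(2^jr/r))^k$, and $A_p$ H\"older converts the $L^1$-integral over each annulus into $\bigl\|\|\vec{f}\|_{l_2}\bigr\|_{L^p_w(B(x_0,2^{j+1}r))}/w(B(x_0,2^{j+1}r))^{1/p}$.

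Taking $L^p_w(B_0)$-norm in $x$ and using that $|b(x)-b_{B_0}|^{k-j}$ contributes at most $\|b\|_*^{k-j} w(B_0)^{1/p}$ (again via John--Nirenberg in the weighted setting, valid for $w\in A_p$), one arrives at
$$
\bigl\| \| [b,\mathrm{g}_{\lambda,\alpha}^{*}]^k \vec{f} \|_{l_2} \bigr\|_{L_w^p(B_0)} \lesssim \|b\|_*^k\, w(B_0)^{\frac{1}{p}} \int_{r}^{\infty} \ln^{k}\!\Bigl(e+\tfrac{t}{r}\Bigr)\, \frac{\ess_{t<s<\infty}\bigl\|\|\vec{f}\|_{l_2}\bigr\|_{L^p_w(B(x_0,s))}}{w(B(x_0,t))^{\frac{1}{p}}}\,\frac{dt}{t}.
$$
Dividing by $\varphi_2(x_0,r)\,w(B_0)^{1/p}$, invoking condition \eqref{wueq3}, and taking the supremum over $(x_0,r)$ yields the required Morrey bound. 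The main obstacle is the pointwise estimate for $\vec{f}_2$: it must simultaneously absorb the $l_2$-vector structure, the $\mathrm{g}_\lambda^*$-kernel decay for $\lambda > 3 + \alpha/n$, the $k$-fold BMO commutator expansion producing the $\ln^k$-weight, and the passage from $L^1$-annular averages to $L^p_w$-norms through $A_p$; the rest is a standard Morrey-tail argument.
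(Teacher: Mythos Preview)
Your proposal is correct and its skeleton --- local part via Theorem~B, a pointwise off-diagonal bound yielding the $\ln^k$-weighted tail, then Theorem~\ref{thm3.2.X} together with condition~\eqref{wueq3} --- matches the paper's implied argument; the paper's own proof of Theorem~\ref{wuteo1.4} is just the single sentence ``By using the argument as similar as the above proofs and that of Theorem~\ref{wuteo1.2}, we can also show the boundedness of $[b,\mathrm{g}_{\lambda,\alpha}^{*}]^k$.'' Two technical choices differ from what you outline. For the commutator, the paper (Lemma~\ref{lem3.3.ComWeightSq}) forgoes your binomial expansion in favour of the cruder pointwise split $|b(x)-b(z)|^k \lesssim |b(x)-b_{B,w}|^k + |b(z)-b_{B,w}|^k$, subtracting the \emph{weighted} mean $b_{B,w}$ so that Lemma~\ref{LinLU} applies directly; this gives only two terms $A(x)+B(x)$ instead of $k+1$. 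For the $\mathrm{g}_{\lambda,\alpha}^*$ structure, the paper's route (Lemma~\ref{lem3.3.Weightga}) is a dyadic decomposition of the \emph{cone} in $(y,t)$, bounding $\mathrm{g}_{\lambda,\alpha}^*$ pointwise by $G_\alpha + \sum_j 2^{-jn\lambda/2} G_{\alpha,2^j}$ and then estimating each fixed-aperture piece; the condition $\lambda > 3 + \alpha/n$ enters as summability of $\sum_j 2^{-jn\lambda/2}\cdot 2^{j(3n/2+\alpha)}$. Your sketch instead decomposes $\Rn\setminus 2B_0$ into dyadic $z$-annuli. That can be made to work, but your assertion that $\lambda > 3+\alpha/n$ is what makes the \emph{annulus} series summable is misplaced: the annulus sum already collapses into the integral $\int_{2r}^\infty \ldots\, dt/t$ without any help from $\lambda$, while the $\lambda$-condition governs a separate aperture sum that you still need to perform (inside each annulus, or globally as the paper does). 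This is a bookkeeping slip rather than a real gap.
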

In \cite{Wilson1}, the author proved that the functions $G_{\alpha}f$ and $\mathrm{g}_{\alpha}f$ are pointwise comparable. Thus, as a consequence of Theorem \ref{wuteo1.1} and Theorem \ref{wuteo1.3}, we have the following results.
\begin{cor}\label{wucor1.5}
Let $1\le p<\infty$, $0<\alpha\leq 1$, $w \in A_{p}$ and $(\varphi_{1},\varphi_{2})$ satisfies condition \eqref{wueq2}, then $\mathrm{g}_{\alpha}$ is bounded from $M^{p,\varphi_{1}}_{w}(l_2)$ to $M^{p,\varphi_{2}}_{w}(l_2)$ for $p>1$ and from $M^{1,\varphi_{1}}_{w}(l_2)$ to $WM^{1,\varphi_{2}}_{w}(l_2)$.
\end{cor}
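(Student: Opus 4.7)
The plan is to reduce the corollary directly to Theorem \ref{wuteo1.1} by exploiting the pointwise comparability of $G_\alpha f$ and $\mathrm{g}_\alpha f$ established by Wilson in \cite{Wilson1}. That result gives constants $c_1, c_2 > 0$, independent of $f$, with
$$
c_1 \, \mathrm{g}_\alpha f(x) \leq G_\alpha f(x) \leq c_2 \, \mathrm{g}_\alpha f(x)
$$
for every $x \in \Rn$ and every $f \in L_1^{\rm loc}(\Rn)$. First I would verify that this scalar equivalence passes to the vector-valued setting: applying it componentwise to $\vec{f} = (f_1, f_2, \ldots)$, squaring and summing in $j$ yields
$$
c_1 \, \| \mathrm{g}_\alpha \vec{f}(x) \|_{l_2} \leq \| G_\alpha \vec{f}(x) \|_{l_2} \leq c_2 \, \| \mathrm{g}_\alpha \vec{f}(x) \|_{l_2},
$$
with the same constants. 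This is the only step involving the structure of the two operators, and it is essentially free once one observes that $G_\alpha$ and $\mathrm{g}_\alpha$ act linearly on the components.

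Next, since the Morrey and weak Morrey norms built on $\| \cdot \|_{l_2}$ are monotone with respect to pointwise domination of the $l_2$-length, the pointwise comparison above immediately transfers to norm comparison:
$$
\| \mathrm{g}_\alpha \vec{f} \|_{M^{p,\varphi_2}_w(l_2)} \approx \| G_\alpha \vec{f} \|_{M^{p,\varphi_2}_w(l_2)},
$$
and likewise with $WM^{1,\varphi_2}_w(l_2)$ in place of the strong norm when $p=1$. Then Theorem \ref{wuteo1.1}, which is available under the hypotheses $w \in A_p$ and the condition \eqref{wueq2}, provides
$$
\| G_\alpha \vec{f} \|_{M^{p,\varphi_2}_w(l_2)} \lesssim \| \vec{f} \|_{M^{p,\varphi_1}_w(l_2)} \quad (p>1)
$$
together with the corresponding weak-type bound for $p=1$. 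Combining the two comparisons finishes the proof.

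There is no genuine obstacle here; the only point requiring any care is that Wilson's constants $c_1, c_2$ in the scalar comparison are absolute (depending on $\alpha$ and $n$, not on $f$), so they can be pulled outside the $l_2$-sum and outside the subsequent weighted Morrey supremum. Because the argument is entirely structural, it uses neither the specific form of $A_\alpha$, nor the weight condition \eqref{wueq2}, beyond what is already incorporated in Theorem \ref{wuteo1.1}.
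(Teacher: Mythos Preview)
Your proposal is correct and follows essentially the same approach as the paper, which simply cites Wilson's pointwise comparability of $G_\alpha f$ and $\mathrm{g}_\alpha f$ and then invokes Theorem~\ref{wuteo1.1}. Your explanation of the componentwise passage to the $l_2$-setting and the monotonicity of the Morrey and weak Morrey norms merely fills in details the paper leaves implicit.
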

\begin{cor}\label{wucor1.6}
Let $1<p<\infty$, $0<\alpha\leq 1$, $w \in A_{p}$, $b\in BMO$ and $(\varphi_{1},\varphi_{2})$ satisfies condition \eqref{wueq3}, then $[b,\mathrm{g}_{\alpha}]^k$ is bounded from $M^{p,\varphi_{1}}_{w}(l_2)$ to $M^{p,\varphi_{2}}_{w}(l_2)$.
\end{cor}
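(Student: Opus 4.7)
The plan is to deduce Corollary \ref{wucor1.6} from Theorem \ref{wuteo1.3} via a pointwise comparison between $[b,\mathrm{g}_\alpha]^k$ and $[b,G_\alpha]^k$, exactly in parallel with the way Corollary \ref{wucor1.5} is obtained from Theorem \ref{wuteo1.1} using Wilson's pointwise equivalence $\mathrm{g}_\alpha f(x)\sim G_\alpha f(x)$ from \cite{Wilson1}.

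The key observation is that Wilson's scalar pointwise comparison transfers to the commutator setting with no extra work. For fixed $x\in\Rn$, set $f_{x}(z):=[b(x)-b(z)]^{k}f(z)$; then
\begin{equation*}
A_{\alpha,b}^{k}f(t,y)=\sup_{\phi\in C_{\alpha}}\bigl|f_{x}*\phi_{t}(y)\bigr|=A_{\alpha}f_{x}(t,y),
\end{equation*}
so that $[b,\mathrm{g}_\alpha]^{k}f(x)=\mathrm{g}_\alpha f_{x}(x)$ and $[b,G_\alpha]^{k}f(x)=G_\alpha f_{x}(x)$. Applying Wilson's pointwise equivalence to $f_{x}$ at the point $x$ yields
\begin{equation*}
[b,\mathrm{g}_\alpha]^{k}f(x)\lesssim [b,G_\alpha]^{k}f(x)\qquad\text{for a.e. }x\in\Rn.
\end{equation*}
Running this inequality componentwise on $\vec{f}=(f_{j})_{j\geq 1}$ and taking the $l_2$-norm gives the vector-valued bound $\bigl\|[b,\mathrm{g}_\alpha]^{k}\vec{f}(x)\bigr\|_{l_2}\lesssim\bigl\|[b,G_\alpha]^{k}\vec{f}(x)\bigr\|_{l_2}$ almost everywhere.

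With the pointwise vector-valued bound in hand, the rest is bookkeeping. Because the norm of $M^{p,\varphi_{2}}_{w}(l_2)$ is defined by first taking the $l_2$-norm pointwise, then the weighted $L^{p}$-norm over a ball, and finally the supremum against $\varphi_{2}(x,r)^{-1}w(B(x,r))^{-1/p}$, pointwise domination propagates directly, yielding
\begin{equation*}
\bigl\|[b,\mathrm{g}_\alpha]^{k}\vec{f}\bigr\|_{M^{p,\varphi_{2}}_{w}(l_2)}\lesssim\bigl\|[b,G_\alpha]^{k}\vec{f}\bigr\|_{M^{p,\varphi_{2}}_{w}(l_2)}\lesssim\bigl\|\vec{f}\bigr\|_{M^{p,\varphi_{1}}_{w}(l_2)},
\end{equation*}
where the final inequality is Theorem \ref{wuteo1.3}.

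The only nontrivial step is the pointwise comparison, and even there the substance lies in citing \cite{Wilson1}: one must verify that the equivalence stated there applies to arbitrary locally integrable functions, and in particular to $f_{x}$ whenever $b\in BMO(\Rn)$ and $f\in L_{1}^{\mathrm{loc}}(\Rn)$, rather than to some restricted subclass. Granted this, the argument reduces to a one-line domination, and I expect no genuine technical obstacle beyond confirming that Wilson's comparability is insensitive to the specific function inserted into the convolution.
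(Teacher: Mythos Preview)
Your proposal is correct and follows essentially the same route as the paper: the paper simply states that Wilson's pointwise comparability of $G_\alpha f$ and $\mathrm{g}_\alpha f$ makes Corollary~\ref{wucor1.6} an immediate consequence of Theorem~\ref{wuteo1.3}. Your write-up supplies the detail the paper leaves implicit, namely that $A_{\alpha,b}^{k}f(t,y)=A_\alpha f_x(t,y)$ with $f_x(z)=[b(x)-b(z)]^k f(z)$, so Wilson's comparison applied to $f_x$ transfers the pointwise equivalence to the commutators.
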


\begin{rem}
Note that, in the scalar valued case the Theorems \ref{wuteo1.1} - \ref{wuteo1.4} and Corollaries \ref{wucor1.5} - \ref{wucor1.6} was proved in \cite{GulShuk2013} ($w\equiv 1$) and 
\cite{GulMathN}. 
Also, in the scalar valued case and $w \equiv A_p$ and $\varphi_1(x,r) = \varphi_2(x,r) \equiv w(B(x,r))^{\frac{\kappa-1}{p}}$, $0<\kappa<1$ Theorems \ref{wuteo1.1}-\ref{wuteo1.4} and Corollaries \ref{wucor1.5}-\ref{wucor1.6} was proved by Wang in \cite{Wang1, Wang1W}. How as, if $\varphi(x,r) \equiv w(B(x,r))^{\frac{\kappa-1}{p}}$, then the vector-valued generalized weighed Morrey space $M^{p,\varphi}_{w}(l_2)$ coincide the vector-valued weighed Morrey space $L^{p,\kappa}_{w}(l_2)$ and the pair  $(w(B(x,r))^{\frac{\kappa-1}{p}}$, $w(B(x,r))^{\frac{\kappa-1}{p}})$ satisfies the both conditions
\eqref{wueq2} and \eqref{wueq3}. Indeed, by Lemma \ref{Graf} there exists $C>0$ and $\delta>0$ such that for all $x \in \Rn$ and $t>r$:
$$
w(B(x,t)) \ge C \Big(\frac{t}{r}\Big)^{n\delta} \, w(B(x,r)).
$$
Then
\begin{align*}
\int_{r}^{\infty} \frac{\es_{t<s<\infty} w(B(x,s))^{\frac{\kappa}{p}}}{w(B(x,t))^{1/p}} \, \frac{dt}{t}
& \le \int_{r}^{\infty} \ln^k \Big(e+\frac{t}{r}\Big) \, \frac{\es_{t<s<\infty} w(B(x,s))^{\frac{\kappa}{p}}}{w(B(x,t))^{1/p}} \, \frac{dt}{t}
\\
& = \int_{r}^{\infty} \ln^k \Big(e+\frac{t}{r}\Big) \, w(B(x,t))^{\frac{\kappa-1}{p}} \, \frac{dt}{t}
\end{align*}
\begin{align*}
& \lesssim \int_{r}^{\infty} \ln^k \Big(e+\frac{t}{r}\Big) \, \Big(\Big(\frac{t}{r}\Big)^{n\delta} \,  w(B(x,r))\Big)^{\frac{\kappa-1}{p}} \, \frac{dt}{t}
\\
& = w(B(x,r))^{\frac{\kappa-1}{p}} \, \int_{r}^{\infty} \ln^k \Big(e+\frac{t}{r}\Big) \, \Big(\frac{t}{r}\Big)^{n\delta \frac{\kappa-1}{p}} \, \frac{dt}{t}
\\
& = w(B(x,r))^{\frac{\kappa-1}{p}} \, \, \int_{1}^{\infty} \ln^k \Big(e+\tau\Big) \, \tau^{n\delta \frac{\kappa-1}{p}} \, \frac{d\tau}{\tau}
\\
& \thickapprox w(B(x,r))^{\frac{\kappa-1}{p}}.
\end{align*}
\end{rem}

Throughout this paper, we use the notation $A \lesssim B$ to mean that there is a positive constant $C$ independent of all essential variables such that $A\leq CB$. Moreover, $C$ may be different from place to place.

\

\section{Vector-valued generalized weighted Morrey spaces }

The classical Morrey spaces $M^{p,\lambda}$ were originally introduced by Morrey in \cite{Morrey} to study the local behavior of solutions to second order elliptic partial
differential equations. For the properties and applications of classical Morrey spaces, we refer the readers to \cite{Gi, KJF}.

We denote by $M^{p,\lambda}(l_2) \equiv M^{p,\lambda}(\Rn, l_2)$ the vector-valued Morrey space, the space of all vector-valued functions $\vec{f}\in L^{p,\rm loc}(l_2)$ with finite quasinorm
$$
  \left\| \vec{f}\right\|_{M^{p,\lambda}(l_2)} = \sup_{x\in \Rn, \; r>0} r^{-\frac{\lambda}{p}} \|\vec{f}\|_{L^{p}(B(x,r), l_2)},
$$
where $1\le p < \infty$ and $0 \le \lambda \le n$.

Note that $M^{p,0}(l_2)=L^{p}(l_2)$ and $M^{p,n}(l_2)=L^{\infty}(l_2)$. If $\lambda<0$ or $\lambda>n$, then $M^{p,\lambda}(l_2)={\Theta}$,
where $\Theta$ is the set of all vector-valued functions equivalent to $0$ on $\Rn$.

We define the vector-valued generalized weighed Morrey spaces as follows.
\begin{defn}
Let $1 \le p < \infty$, $\varphi$ be a positive measurable vector-valued function on $\Rn \times (0,\infty)$ and $w$ be non-negative
measurable function on $\Rn$. We denote by $M^{p,\varphi}_{w}(l_2)$ the vector-valued generalized weighted Morrey space, the
space of all vector-valued functions $\vec{f} \in L^{p,\rm loc}_{w}(l_2)$ with finite norm
$$
\|\vec{f}\|_{M^{p,\varphi}_{w}(l_2)} = \sup\limits_{x\in\Rn, r>0} \varphi(x,r)^{-1} \, w(B(x,r))^{-\frac{1}{p}} \, \|f\|_{L^{p}_{w}(B(x,r),l_2)},
$$
where $L^{p}_{w}(B(x,r),l_2)$ denotes the vector-valued weighted $L^{p}$-space of measurable functions
$f$ for which
\begin{equation*}
\|\vec{f}\|_{L^{p}_{w}(B(x,r))} \equiv \|\vec{f} \chi_{_{B(x,r)}}\|_{L^{p}_{w}(\Rn)}
= \left(\int_{B(x,r)}\|\vec{f}(y)\|_{l_2}^p w(y)dy\right)^{\frac{1}{p}}.
\end{equation*}

Furthermore, by $WM^{p,\varphi}_{w}(l_2)$ we denote the vector-valued weak generalized weighted Morrey space of all functions $f\in WL^{p,\rm loc}_{w}(l_2)$ for which
$$
\|\vec{f}\|_{WM^{p,\varphi}_{w}(l_2)} = \sup\limits_{x\in\Rn, r>0}
\varphi(x,r)^{-1} \, w(B(x,r))^{-\frac{1}{p}} \, \|\vec{f}\|_{WL^{p}_{w}(B(x,r),l_2)} < \infty,
$$
where $WL^{p}_{w}(B(x,r),l_2)$ denotes the weak $L^{p}_{w}$-space of measurable functions
$f$ for which
\begin{equation*}
\|\vec{f}\|_{WL^{p}_{w}(B(x,r),l_2)} \equiv \|\vec{f} \chi_{_{B(x,r)}}\|_{WL^{p}_{w}(l_2)}
= \sup_{t>0} t \left(\int_{\{y\in B(x,r) : \, \|\vec{f}(y)\|_{l_2} > t \}} w(y)dy\right)^{\frac{1}{p}}.
\end{equation*}
\end{defn}

\begin{rem}
$(1)~$ If $w\equiv 1$, then $M^{p,\varphi}_{1}(l_2)=M^{p,\varphi}(l_2)$ is the vector-valued generalized Morrey space.

$(2)~$ If $\varphi(x,r) \equiv w(B(x,r))^{\frac{\kappa-1}{p}}$, then
$M^{p,\varphi}_{w}(l_2)=L^{p,\kappa}_{w}(l_2)$ is the vector-valued weighted Morrey space.

$(3)~$ If $\varphi(x,r) \equiv v(B(x,r))^{\frac{\kappa}{p}} w(B(x,r))^{-\frac{1}{p}}$, then
$M^{p,\varphi}_{w}(l_2)=L^{p,\kappa}_{v,w}(l_2)$ is the vector-valued two weighted Morrey space.

$(4)~$ If $w\equiv1$ and $\varphi(x,r)=r^{\frac{\lambda-n}{p}}$ with $0<\lambda<n$, then $M^{p,\varphi}_{w}(l_2)=L^{p,\lambda}(l_2)$ is the
vector-valued Morrey space and $WM^{p,\varphi}_{w}(l_2)=WL^{p,\lambda}(l_2)$ is the vector-valued weak Morrey space.

$(5)~$ If $\varphi(x,r) \equiv w(B(x,r))^{-\frac{1}{p}}$, then $M^{p,\varphi}_{w}(l_2)=L^{p}_{w}(l_2)$ is the vector-valued weighted Lebesgue space.
\end{rem}

\

\section{Preliminaries and some lemmas}

By a weight function, briefly weight, we mean a locally integrable function on $\Rn$ which takes values in
$(0,\infty)$ almost everywhere. For a weight $w$ and a measurable set $E$,
we define $w(E) = \int_{E} w(x) dx$, and denote the Lebesgue measure of $E$ by $|E|$ and the
characteristic function of $E$ by $\chi_{_{E}}$. Given a weight $w$, we say that
$w$ satisfies the doubling condition if there exists a constant $D > 0$ such that
for any ball $B$, we have $w(2B) \le D w(B)$. When $w$ satisfies this condition,
we write brevity $w \in \Delta_{2}$.

If  $w$ is a weight function, we denote by $L^{p}_{w}(l_2)\equiv L^{p}_{w}(\Rn,l_2)$ the
vector-valued weighted Lebesgue space defined by finiteness of the norm
$$
\|\vec{f}\|_{L^{p}_{w}(l_2)}=\left(\int_{\Rn} \|\vec{f}(x)\|_{l_2}^p w(x)dx\right)^{\frac{1}{p}}<\infty, ~~~  \mbox{if} ~~ 1\le p<\infty
$$
and by $\|\vec{f}\|_{L^{\i}_{w}(l_2)}=\ess\limits_{x\in\Rn} \|\vec{f}(x)\|_{l_2} w(x)$ if $p=\infty$.

We recall that a weight function $w$ is in the Muckenhoupt's class $A_{p}$ \cite{Muckenh}, $1<p<\infty$, if
\begin{align*}
[w]_{A_{p}}: & = \sup\limits_{B} [w]_{A_{p}(B)} 
\\
& = \sup\limits_{B}
\left(\frac{1}{|B|}\int_{B} w(x)dx\right)\left(\frac{1}{|B|}\int_{B}w(x)^{1-p^{\prime}}dx\right)^{p-1}<\infty,
\end{align*}
where the $\sup$ is taken with respect to all the balls $B$ and
$\frac{1}{p}+\frac{1}{p^{\prime}}=1$. Note that, for all balls $B$ by H\"{o}lder's inequality
\begin{equation*}  \label{ghj5}
[w]_{A_{p}(B)}^{1/p}=|B|^{-1} \|w \|_{L^1(B)}^{1/p} \, \|w^{-1/p} \|_{L^{p'}(B)} \ge 1.
\end{equation*}
For $p=1$, the class $A_1$ is defined
by the condition $ Mw(x)\leq Cw(x) $ with $[w]_{A_1}= \sup\limits_{x\in\Rn}
\frac{Mw(x)}{w(x)},$  and for $p=\infty$ $~A_{\infty}=\bigcup_{1\le p<\infty}A_{p}$ and
$[w]_{A_{\i}}=\inf\limits_{1 \le p < \i} [w]_{A_{p}}$.

\begin{lem} \label{Graf} {\rm(\cite{Grafakos})}
$(1)~$ If $w \in A_p$ for some $1 \le p < \infty$, then $w \in \Delta_{2}$. Moreover, for all $\lambda > 1$
$$
w(\lambda B) \le \lambda^{np}[w]_{A_p} w(B).
$$

$(2)~$ If $w \in A_{\infty}$, then $w \in \Delta_{2}$. Moreover, for all $\lambda > 1$
$$
w(\lambda B) \le 2^{\lambda^{n}} [w]_{A_{\infty}} w(B).
$$

$(3)~$ If $w \in A_p$ for some $1 \le p \le \infty$, then there exit
$C>0$ and $\delta > 0$ such that for any ball $B$ and a measurable
set $S \subset B$,
$$
\frac{w(S)}{w(B)} \le C \Big( \frac{|S|}{|B|} \Big)^{\delta}.
$$

\end{lem}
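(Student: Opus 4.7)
The plan is to derive all three parts from the defining $A_p$ inequality together with H\"older and its counterpart, the Coifman--Fefferman reverse H\"older inequality. A natural order is: first obtain (1) by a dual H\"older argument, then get (2) from the relation $A_\infty=\bigcup_{p\ge 1}A_p$, and finally extract (3) from reverse H\"older.

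For part~(1) with $p>1$, I would first prove the ``strong doubling'' form: for every ball $B$ and every measurable $S\subset B$,
$$
\Big(\frac{|S|}{|B|}\Big)^{p}\le [w]_{A_p}\,\frac{w(S)}{w(B)}.
$$
This comes from writing $|S|=\int_S w^{1/p}\cdot w^{-1/p}\,dx$ and applying H\"older with exponents $p$ and $p'$ to obtain $|S|^p\le w(S)\big(\int_B w^{1-p'}\big)^{p-1}$, then using the $A_p$ condition in the form $\big(\int_B w^{1-p'}\big)^{p-1}\le [w]_{A_p}|B|^p/w(B)$. Specializing to $S=B$ and replacing $B$ by $\lambda B$ yields $w(\lambda B)\le \lambda^{np}[w]_{A_p}w(B)$. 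The case $p=1$ is handled by the pointwise bound $Mw\le [w]_{A_1}w$: for any $x\in B$ one has $Mw(x)\ge w(\lambda B)/|\lambda B|$, so averaging in $x\in B$ delivers a doubling estimate of the same shape. For part~(2), because $A_\infty=\bigcup_{1\le p<\infty}A_p$, any $w\in A_\infty$ already lies in some $A_p$ and is therefore doubling by~(1); the explicit constant $2^{\lambda^n}[w]_{A_\infty}$ is obtained by iterating the reverse Jensen characterization of $A_\infty$ along a chain of at most $\lambda^n$ congruent subballs whose union covers $\lambda B$.

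For part~(3), I would invoke the reverse H\"older inequality for $A_p$ weights: there exist $q>1$ and $C>0$ such that $\big(\tfrac{1}{|B|}\int_B w^q\big)^{1/q}\le \tfrac{C}{|B|}\int_B w$ for every ball $B$. Given this, H\"older yields
$$
w(S)=\int_S w\le |S|^{1/q'}\Big(\int_B w^q\Big)^{1/q}\le C\,|S|^{1/q'}|B|^{-1/q'}w(B),
$$
which is exactly the claim with $\delta=1/q'$. The principal difficulty is the reverse H\"older inequality itself; it is the one non-elementary ingredient and is proved via a Calder\'on--Zygmund decomposition at a height slightly above $\tfrac{1}{|B|}\int_B w$ combined with a good-$\lambda$/iteration argument. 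Once it is in hand, the three items of the lemma follow from the elementary manipulations above.
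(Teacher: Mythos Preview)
The paper does not supply a proof of this lemma; it is quoted verbatim from Grafakos's textbook and used as a black box. So there is no ``paper's own proof'' to compare against---your sketch is essentially the standard argument one finds in that reference, and parts~(1) and~(3) are handled correctly: the strong-doubling inequality $(|S|/|B|)^p\le[w]_{A_p}\,w(S)/w(B)$ via H\"older plus the $A_p$ condition yields the bound $w(\lambda B)\le\lambda^{np}[w]_{A_p}w(B)$ exactly, and the reverse H\"older inequality gives~(3) with $\delta=1/q'$ as you write.

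One caveat on part~(2). Your first sentence (``$w\in A_\infty$ lies in some $A_p$, hence is doubling by~(1)'') is fine for the qualitative claim $w\in\Delta_2$, but it does not produce the specific constant $2^{\lambda^n}[w]_{A_\infty}$, since the $A_p$ bound from~(1) involves $[w]_{A_p}$ for the particular $p$, not $[w]_{A_\infty}=\inf_p[w]_{A_p}$. The ``chain of $\lambda^n$ subballs plus reverse Jensen'' idea you gesture at is not spelled out, and it is not obvious it yields that precise form either. This is a minor point---the quantitative constant in~(2) is never actually used in the paper (only~(1) and~(3) are invoked in the arguments)---but as written that step is a gap rather than a proof.
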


We are going to use the following result on the boundedness of the Hardy operator
$$
(Hg)(t):=\frac{1}{t}\int_0^t g(r)d\mu(r),~ 0<t<\infty,
$$
where $\mu$ is a non-negative Borel measure on $(0,\infty)$.
\begin{thm}\label{thm3.2.XX}{\rm(\cite{CarPickSorStep})}
The inequality
\begin{equation*}
\ess_{t>0}\omega(t) Hg(t) \leq c \ess_{t>0}v(t)g(t)
\end{equation*}
holds for all functions $g$ non-negative and non-increasing on $(0,\i)$ if and
only if
\begin{equation*}
A:= \sup_{t>0}\frac{\omega(t)}{t}\int_0^t
\frac{d\mu(r)}{\ess_{0<s<r}v(s)}<\i,
\end{equation*}
and $c\thickapprox A$.
\end{thm}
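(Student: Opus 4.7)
The plan is to view Theorem \ref{thm3.2.XX} as an equivalence between a weighted Hardy-type inequality restricted to non-increasing test functions and a single scalar quantity $A$ built from $\omega$, $v$, and $\mu$. I would handle the two directions -- sufficiency ($A<\infty$ implies the inequality with $c\lesssim A$) and necessity (the inequality with constant $c$ implies $A\lesssim c$) -- separately, since each rests on a distinct idea. The overall strategy is the standard ``duality-free'' argument for restricted weighted Hardy inequalities on the half-line: one uses the monotonicity of admissible $g$ to pass pointwise information from a supremum-type norm through the averaging operator $H$.

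For sufficiency, set $M:=\ess_{t>0}v(t)g(t)$, so $v(s)g(s)\le M$ for a.e.\ $s$. The key step exploits the monotonicity of $g$: since $g$ is non-increasing, $g(r)\le g(s)$ for a.e.\ $s\in(0,r)$, hence
$$
g(r)\le \es_{0<s<r} g(s) \le \es_{0<s<r}\frac{M}{v(s)}=\frac{M}{\ess_{0<s<r}v(s)}.
$$
Substituting this bound into $(Hg)(t)=t^{-1}\int_0^t g(r)\,d\mu(r)$ and multiplying by $\omega(t)$ gives
$$
\omega(t)(Hg)(t)\le M\cdot\frac{\omega(t)}{t}\int_0^t \frac{d\mu(r)}{\ess_{0<s<r}v(s)}\le A\cdot M,
$$
and taking $\ess_{t>0}$ delivers the desired inequality with constant $A$, so $c\le A$.

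For necessity, the natural test function at a fixed $t_0>0$ is
$$
g_{t_0}(r):=\frac{\chi_{(0,t_0)}(r)}{\ess_{0<s<r}v(s)},
$$
which is non-negative and non-increasing because $r\mapsto \ess_{0<s<r}v(s)$ is non-decreasing while $\chi_{(0,t_0)}$ is non-increasing. By construction $v(t)g_{t_0}(t)\le 1$ a.e., so the right-hand side of the assumed inequality is at most $c$. Evaluating the left-hand side at $t=t_0$ gives
$$
\omega(t_0)(Hg_{t_0})(t_0)=\frac{\omega(t_0)}{t_0}\int_0^{t_0}\frac{d\mu(r)}{\ess_{0<s<r}v(s)},
$$
and supping over $t_0$ produces $A\le c$, completing the reverse inequality.

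The main obstacles I anticipate are bookkeeping rather than conceptual: verifying measurability of $r\mapsto \ess_{0<s<r}v(s)$ (which is immediate from its monotonicity), handling the degenerate case where this quantity vanishes on a set of positive measure (by either restricting to its support or adopting the convention $1/0=\infty$, in which case either $A=\infty$ or the inequality already fails trivially), and ensuring the test family $g_{t_0}$ lies in a class for which $Hg_{t_0}$ is well-defined and the substitution is legitimate. Once these dispatched, combining the two directions yields the equivalence with sharp constant $c\thickapprox A$, as claimed.
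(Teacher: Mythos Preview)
The paper does not give its own proof of Theorem~\ref{thm3.2.XX}; it simply quotes the result from \cite{CarPickSorStep} and uses it as a black box, so there is nothing to compare your argument against. That said, your two-direction argument is correct and is essentially the standard proof: the sufficiency uses monotonicity of $g$ to replace $g(r)$ by $M/\ess_{0<s<r}v(s)$, and the necessity tests against $g_{t_0}(r)=\chi_{(0,t_0)}(r)/\ess_{0<s<r}v(s)$. The only point worth tightening is the verification that $v(t)g_{t_0}(t)\le 1$ a.e., i.e.\ that $v(t)\le \ess_{0<s<t}v(s)$ for a.e.\ $t$; this follows because if the set $\{t:v(t)>q>\ess_{0<s<t}v(s)\}$ had positive measure for some rational $q$, then picking $t'$ with $|\{s<t':v(s)>q\}|>0$ would force $\ess_{0<s<t'}v(s)>q$, a contradiction. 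With that detail (and the routine interpretation of $\sup$ as $\ess\sup$ in the definition of $A$), your proof is complete.
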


We also need the following statement on the boundedness of the Hardy type operator
$$
(H_1 g)(t):=\frac{1}{t}\int_0^t \ln^k \Big(e+\frac{t}{r}\Big) \,  g(r)d\mu(r),~ 0<t<\infty,
$$
where $\mu$ is a non-negative Borel measure on $(0,\infty)$.
\begin{thm}\label{thm3.2.X}
The inequality
\begin{equation*}
\ess_{t>0}\omega(t) H_1 g(t) \leq c \ess_{t>0}v(t)g(t)
\end{equation*}
holds for all functions $g$ non-negative and non-increasing on $(0,\i)$ if and
only if
\begin{equation*}
A_1:= \sup_{t>0}\frac{\omega(t)}{t}\int_0^t \ln^k \Big(e+\frac{t}{r}\Big) \, \frac{d\mu(r)}{\ess_{0<s<r}v(s)}<\i,
\end{equation*}
and $c\thickapprox A_1$.
\end{thm}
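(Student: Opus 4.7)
The plan is to mimic the proof of Theorem~\ref{thm3.2.XX} from \cite{CarPickSorStep}, treating the positive weight $\ln^k(e+t/r)$ as an innocuous multiplicative factor inside the $r$-integral. Since this factor depends only on $t$ and $r$ (not on $g$), is bounded below by $1$, and is finite on any bounded set, the Carro--Pick--Soria--Stepanov mechanism transfers verbatim; the only arithmetic change is that $d\mu(r)$ is systematically replaced by the $t$-dependent kernel $\ln^k(e+t/r)\,d\mu(r)$.

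For the sufficiency, I would let $F := \ess_{s>0} v(s) g(s)$ and set $V(r) := \ess_{0<s<r} v(s)$. Because $g$ is non-increasing and non-negative, for a.e.\ $s\in(0,r)$ one has $g(s) \le F/v(s)$; since $g(r) \le g(s)$, taking the essential infimum in $s$ yields $g(r) \le F/V(r)$. Substituting this into the definition of $H_1 g(t)$ and pulling out $F$ gives
$$
\omega(t) H_1 g(t) \le F \cdot \frac{\omega(t)}{t}\int_0^t \ln^k\Big(e+\frac{t}{r}\Big)\frac{d\mu(r)}{V(r)} \le A_1 F.
$$
Taking $\ess_{t>0}$ yields $c \le A_1$.

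For the necessity, I would test the inequality on the family $g_T(r) := \chi_{(0,T)}(r)/V(r)$, $T > 0$. Since $V$ is non-decreasing, $g_T$ is non-increasing on $(0,\infty)$, and the right-hand side is dominated by $\ess_{0<t<T} v(t)/V(t) \le 1$. Choosing the point $t = T$ in the outer $\ess_{t>0}$ on the left-hand side gives
$$
\frac{\omega(T)}{T}\int_0^T \ln^k\Big(e+\frac{T}{r}\Big)\frac{d\mu(r)}{V(r)} \le c,
$$
and taking the supremum in $T$ gives $A_1 \le c$. Combined with the previous step this establishes $c \thickapprox A_1$.

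The main technical delicacy, as is usual in this family of results, is the careful measurability treatment of $V$ and the justification of the pointwise bound $g(r) \le F/V(r)$ away from a null set \emph{independent} of $r$; this is handled by the same routine approximation (e.g.\ via countable dense families of test radii) that underlies the Carro--Pick--Soria--Stepanov proof. The logarithmic weight contributes no substantive difficulty since $\ln^k(e+t/r)$ is a continuous positive function of $t/r$ and passively participates in both directions.
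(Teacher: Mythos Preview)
Your argument is correct and is precisely the standard two-step proof (the pointwise bound $g(r)\le F/V(r)$ for sufficiency, and testing on $g_T=\chi_{(0,T)}/V$ for necessity) that underlies this family of weighted Hardy-type characterizations. The paper itself gives no proof of Theorem~\ref{thm3.2.X}: it merely remarks that the result can be proved analogously to Theorem~4.3 in \cite{GulAlKar1}, which is exactly the argument you have written out in detail.
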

Note that, Theorem \ref{thm3.2.X} can be proved analogously to Theorem 4.3 in \cite{GulAlKar1}.

\begin{defn} \label{BMOnorm}
$BMO(\Rn)$ is the Banach space modulo constants with the norm $\|\cdot\|_\ast$ defined by
\begin{equation*}
\|b\|_{\ast}=\sup_{x\in\Rn, r>0}\frac{1}{|B(x,r)|}
\int_{B(x,r)}|b(y)-b_{B(x,r)}|dy<\infty,
\end{equation*}
where $b\in L_1^{\rm loc}(\Rn)$ and
$$
b_{B(x,r)}=\frac{1}{|B(x,r)|} \int_{B(x,r)} b(y)dy.
$$
\end{defn}

\begin{lem}\label{MuckWh} {\rm(\cite{MuckWh1}, Theorem 5, p. 236)}
Let $w \in A_{\infty}$. Then the norm $\|\cdot\|_{\ast}$ is equivalent to the norm
$$
\| b \|_{\ast,w} = \sup_{x\in\Rn, r>0}\frac{1}{w(B(x,r))}
\int_{B(x,r)}|b(y)-b_{B(x,r),w}| w(y)dy,
$$
where
$$
b_{B(x,r),w}=\frac{1}{w(B(x,r))} \int_{B(x,r)} b(y) w(y) dy.
$$
\end{lem}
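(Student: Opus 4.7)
I would prove the equivalence by establishing the two inequalities $\|b\|_{*,w} \lesssim \|b\|_{*}$ and $\|b\|_{*} \lesssim \|b\|_{*,w}$ separately, each via a John--Nirenberg style argument, with a common reduction step. The reduction is the elementary observation that for any ball $B$ and any constant $c$,
$$
\int_B |b(y)-b_{B,w}|\, w(y)\, dy \le 2 \int_B |b(y)-c|\, w(y)\, dy,
$$
together with the analogous unweighted inequality for $b_B$. This allows us to compare the averages against any convenient constant at the cost of a factor $2$.

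For the direction $\|b\|_{*,w} \lesssim \|b\|_{*}$, I would apply the classical John--Nirenberg inequality on each ball $B$ to obtain exponential decay of the Lebesgue measure of the level sets,
$$
\big|\{y\in B:|b(y)-b_B|>\lambda\}\big| \le C_1 |B|\, e^{-C_2 \lambda/\|b\|_{*}},
$$
and then use the $A_\infty$ self-improvement $w(E)/w(B)\lesssim (|E|/|B|)^{\delta}$ from Lemma~\ref{Graf}(3) to transfer this decay to the weighted measure. Integrating via the layer-cake formula produces $\frac{1}{w(B)}\int_B |b-b_B|\,w\,dy \lesssim \|b\|_{*}$, which after the reduction above gives the required bound for $\|b\|_{*,w}$.

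For the reverse direction $\|b\|_{*} \lesssim \|b\|_{*,w}$, the essential new ingredient is a weighted John--Nirenberg estimate
$$
w\big(\{y\in B:|b(y)-b_{B,w}|>\lambda\}\big) \le C_1' \,w(B)\, e^{-C_2' \lambda/\|b\|_{*,w}},
$$
obtained by running a Calder\'on--Zygmund stopping-time argument with respect to the doubling measure $w\,dy$ (doubling being guaranteed by Lemma~\ref{Graf}(1)): one subdivides $B$ dyadically and selects those subcubes on which the $w$-average of $|b-b_{B,w}|$ first exceeds a large multiple of $\|b\|_{*,w}$, then iterates. To convert back to Lebesgue measure, since $w\in A_\infty$ lies in some $A_p$, a short H\"older argument against the $A_p$ condition yields the reverse estimate $|E|/|B|\lesssim (w(E)/w(B))^{1/p}$; integration then provides $\frac{1}{|B|}\int_B |b-b_{B,w}|\,dy\lesssim \|b\|_{*,w}$, and the reduction closes the proof.

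The main obstacle is the weighted John--Nirenberg inequality used in the second direction: the stopping-time decomposition must be reorganized entirely in terms of the measure $w\,dy$, and one needs to control the ratio of $w$-averages between a selected cube and its dyadic parent, which is precisely the doubling property of $w$. Once this estimate is available, the transfer between weighted and unweighted measures via the two-sided $A_\infty$ estimates of Lemma~\ref{Graf} is routine and yields the desired equivalence with constants depending only on $n$ and $[w]_{A_\infty}$.
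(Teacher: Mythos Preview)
The paper does not actually prove this lemma; it is quoted as a known result from Muckenhoupt--Wheeden \cite{MuckWh1}, so there is no ``paper's own proof'' to compare with. What the paper does supply, immediately after Remark~\ref{lem2.4.}, is a short argument for the related estimate \eqref{lem2.4.II}, and that argument is exactly your first direction: classical John--Nirenberg on $B$, followed by the $A_\infty$ comparison $w(E)/w(B)\lesssim(|E|/|B|)^{\delta}$ from Lemma~\ref{Graf}(3), then layer-cake integration. So for the inequality $\|b\|_{\ast,w}\lesssim\|b\|_{\ast}$ your plan and the paper's reasoning coincide.

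Your second direction, which the paper does not address at all, is correctly identified as the harder one, and the route you propose---running the Calder\'on--Zygmund stopping-time argument with respect to the doubling measure $w\,dy$ to obtain a weighted John--Nirenberg inequality, then converting back to Lebesgue measure via the $A_p$ estimate $|E|/|B|\lesssim(w(E)/w(B))^{1/p}$---is the standard and correct one. There is no gap in your plan; it simply goes well beyond what the paper offers, since the paper is content to cite the result.
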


\begin{rem} \label{lem2.4.}
$(1)~$ The John-Nirenberg inequality : there are constants $C_1$,
$C_2>0$, such that for all $b \in BMO(\Rn)$ and $\beta>0$
$$
\left| \left\{ x \in B \, : \, |b(x)-b_{B}|>\beta \right\}\right|
\le C_1 |B| e^{-C_2 \beta/\| b \|_{\ast}}, ~~~ \forall B \subset \Rn.
$$

$(2)~$ For $1\le p<\i$ the John-Nirenberg inequality implies that
\begin{equation} \label{lem2.4.I}
\|b\|_{\ast}  \thickapprox \sup_{B}\left(\frac{1}{|B|}
\int_{B}|b(y)-b_{B}|^p dy\right)^{\frac{1}{p}}
\end{equation}
and for $1\le p<\i$ and $w \in A_{\infty}$
\begin{equation} \label{lem2.4.II}
\|b\|_{\ast}  \thickapprox \sup_{B}\left(\frac{1}{w(B)}
\int_{B}|b(y)-b_{B}|^p w(y)dy\right)^{\frac{1}{p}}.
\end{equation}
\end{rem}

Note that, by the John-Nirenberg inequality and Lemma \ref{Graf} (part 3) it follows that
$$
w(\{ x \in B \, : \, |b(x)-b_{B}|> \beta \})
\le C_1^{\delta} w(B) e^{-C_2 \beta \delta/\| b \|_{\ast}}
$$
for some $\delta>0$. Hence
\begin{align*}
\int_{B}|b(y)-b_{B}|^p w(y) dy & = p \int_{0}^{\infty} \beta^{p-1} \;
w(\{ x \in B \, : \, |b(x)-b_{B}|> \beta \}) d\beta
\\
& \le p C_1^{\delta} \, w(B) \, \int_{0}^{\infty} \beta^{p-1} \; e^{-C_2 \beta \delta/\| b \|_{\ast}} \, d\beta
= C_3 w(B) \|b\|_{\ast}^p,
\end{align*}
where $C_3>0$ depends only on $C_1^{\delta}$, $C_2$, $p$, and $\delta$, which implies \eqref{lem2.4.II}.

Also \eqref{lem2.4.I} is a particular case of \eqref{lem2.4.II} with $w \equiv 1$.

The following lemma was proved in \cite{GulEMJ2012}.
\begin{lem}\label{LinLU} 
i) Let $w \in A_{\infty}$ and $b \in BMO(\Rn)$.
Let also $1 \le p < \infty$, $x \in \Rn$, $k > 0$ and $r_1, r_2 > 0$. Then
$$
\Big( \frac{1}{w(B(x,r_1))} \int\limits_{B(x,r_1)} |b(y)-b_{B(x,r_2),w}|^{kp} w(y)dy\Big)^{\frac{1}{p}}
\le C  \, \left( 1+ \Big|\ln\frac{r_1}{r_2} \Big| \right)^{k} \| b \|_{\ast}^{k},
$$
where $C>0$ is independent of $f$, $w$, $x$, $r_1$, and $r_2$.

ii) Let $w \in A_p$ and $b \in BMO(\Rn)$.
Let also $1 < p < \infty$, $x \in \Rn$, $k > 0$ and $r_1, r_2 > 0$. Then
$$
\Big( \frac{1}{w^{1-p'}(B(x,r_1))} \int\limits_{B(x,r_1)} |b(y)-b_{B(x,r_2),w}|^{kp'} w(y)^{1-p'} dy\Big)^{\frac{1}{p'}}
\le C  \, \left( 1+ \Big|\ln\frac{r_1}{r_2} \Big| \right)^{k} \| b \|_{\ast}^{k},
$$
where $C>0$ is independent of $f$, $w$, $x$, $r_1$, and $r_2$.
\end{lem}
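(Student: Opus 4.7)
The plan is to prove both parts by a triangle-inequality split that reduces matters to the weighted John--Nirenberg estimate \eqref{lem2.4.II}, together with a dyadic-chain argument controlling the difference of weighted averages at different scales.

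For part~(i), write
$$
|b(y)-b_{B(x,r_2),w}|\le |b(y)-b_{B(x,r_1),w}|+|b_{B(x,r_1),w}-b_{B(x,r_2),w}|,
$$
raise to the power $kp$ (using $(a+b)^{kp}\lesssim a^{kp}+b^{kp}$), divide by $w(B(x,r_1))$, integrate in $y$, and take the $p$-th root. The first, $y$-dependent piece is dominated by $C\|b\|_{\ast}^{k}$ thanks to Lemma~\ref{MuckWh} and \eqref{lem2.4.II} (applied with the exponent $kp$). For the constant piece $|b_{B(x,r_1),w}-b_{B(x,r_2),w}|^{k}$, assume without loss of generality that $r_1\le r_2$, choose $N\in\mathbb{N}$ with $2^{N-1}r_1\le r_2<2^{N}r_1$ so that $N\lesssim 1+\ln(r_2/r_1)$, and telescope
$$
|b_{B(x,r_1),w}-b_{B(x,r_2),w}|\le\sum_{j=0}^{N-1}|b_{B(x,2^{j}r_1),w}-b_{B(x,2^{j+1}r_1),w}|+|b_{B(x,2^{N}r_1),w}-b_{B(x,r_2),w}|.
$$
For each consecutive pair $B=B(x,R)\subset 2B$, pulling $b_{2B,w}$ into the smaller average and using the doubling condition $w\in\Delta_2$ from Lemma~\ref{Graf} together with \eqref{lem2.4.II} bounds every summand by $\lesssim\|b\|_{\ast}$, giving the desired factor $(1+|\ln(r_1/r_2)|)^{k}\|b\|_{\ast}^{k}$.

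For part~(ii), set $\sigma=w^{1-p'}$. Since $w\in A_p$ we have $\sigma\in A_{p'}\subset A_{\infty}$, so part~(i) applied with $(\sigma,p')$ in place of $(w,p)$ yields
$$
\Bigl(\tfrac{1}{\sigma(B(x,r_1))}\int_{B(x,r_1)}|b(y)-b_{B(x,r_2),\sigma}|^{kp'}\sigma(y)\,dy\Bigr)^{1/p'}\lesssim\bigl(1+|\ln(r_1/r_2)|\bigr)^{k}\|b\|_{\ast}^{k}.
$$
Noting $\sigma(B(x,r_1))=w^{1-p'}(B(x,r_1))$, it only remains to replace $b_{B(x,r_2),\sigma}$ by $b_{B(x,r_2),w}$ at cost $O(\|b\|_{\ast})$. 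For any $A_{\infty}$ weight $\mu$ and any ball $B$, the John--Nirenberg decay in Remark~\ref{lem2.4.} combined with Lemma~\ref{Graf}~(3) produces exponential $\mu$-decay of the level sets of $b-b_B$; integrating the tail gives $|b_{B,\mu}-b_B|\lesssim\|b\|_{\ast}$. Applying this to both $\mu=w$ and $\mu=\sigma$ furnishes $|b_{B(x,r_2),w}-b_{B(x,r_2),\sigma}|\lesssim\|b\|_{\ast}$, and a final triangle-inequality split completes~(ii).

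The main obstacle is this scale-free estimate $|b_{B,\mu}-b_B|\lesssim\|b\|_{\ast}$ with a constant depending only on the $A_{\infty}$ constant of $\mu$: it is precisely what allows the change of reference weight in part~(ii) without any logarithmic loss. Once this ingredient and the dyadic-chain estimate are in place, the remaining manipulations are standard applications of Hölder-type splittings and the weighted John--Nirenberg bound~\eqref{lem2.4.II}.
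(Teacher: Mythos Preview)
The paper does not supply its own proof of this lemma; it simply states ``The following lemma was proved in \cite{GulEMJ2012}'' and defers to that reference. Your argument is correct and is the standard one: the triangle-inequality split, the weighted John--Nirenberg bound \eqref{lem2.4.II} for the oscillation term, and the dyadic-chain estimate (using $w\in A_\infty\subset\Delta_2$ from Lemma~\ref{Graf}) for the difference of weighted averages together give part~(i). For part~(ii), the reduction to part~(i) via $\sigma=w^{1-p'}\in A_{p'}\subset A_\infty$, followed by the switch from $b_{B,\sigma}$ to $b_{B,w}$ using $|b_{B,\mu}-b_B|\lesssim\|b\|_{\ast}$ for any $A_\infty$ weight $\mu$ (an immediate consequence of \eqref{lem2.4.II} with exponent $1$), is entirely valid. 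This is presumably the argument in the cited source.

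One remark on your closing comment: the constants you obtain do depend on $w$ through its $A_\infty$ (respectively $A_p$) characteristic, via the exponent $\delta$ in Lemma~\ref{Graf}(3). The paper's assertion that $C$ is ``independent of $w$'' should be read as independence from the ball data $x,r_1,r_2$ for a fixed weight; genuine uniformity over all $A_\infty$ weights is neither available from this method nor needed anywhere in the paper's applications.
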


\

\section{Proofs of main theorems}

Before proving the main theorems, we need the following lemmas.
\begin{lem}\label{wulem2.3} \cite{Wang1}
For $j\in \mathbb{Z}_{+}$, denote
$$
G_{\alpha,2^{j}}(f)(x)=\left(\int_{0}^{\infty}\int_{|x-y|\leq 2^{j}t}(A_{\alpha}f(y,t))^{2}\frac{dydt}{t^{n+1}}\right)^{\frac{1}{2}}
$$
Let $0<\alpha\leq 1$, $1<p<\i$ and $w \in A_{p}$. Then any $j \in \mathbb{Z}_{+}$, we have
$$
\| G_{\alpha,2^{j}}(f)\|_{L_{w}^{p}}\lesssim 2^{j\big(\frac{3n}{2}+\alpha}\big) \, \| G_{\alpha}(f)\|_{L_{w}^{p}}.
$$
\end{lem}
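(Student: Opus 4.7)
The plan is to bypass any direct estimation of the truncated-cone integral $G_{\alpha,2^{j}}(f)$ and instead invoke the pointwise comparison
\begin{equation*}
G_{\alpha,\beta}(f)(x)\leq \beta^{\frac{3n}{2}+\alpha}G_{\alpha}(f)(x), \qquad x \in \Rn, \ \beta \geq 1,
\end{equation*}
recalled in the introduction immediately after the definition of the varying-aperture intrinsic square function (this is due to Wilson \cite{Wilson1}, whose proof proceeds by a rescaling of the bump $\phi \in C_{\alpha}$ and a change of variables on the cone $\Gamma_{\beta}(x)$). Since $j \in \mathbb{Z}_{+}$, the choice $\beta = 2^{j} \geq 1$ is admissible, so specialising the comparison gives the pointwise bound
\begin{equation*}
G_{\alpha,2^{j}}(f)(x) \leq 2^{j\left(\frac{3n}{2}+\alpha\right)} \, G_{\alpha}(f)(x).
\end{equation*}

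Once this pointwise estimate is in hand, the $L^{p}_{w}$ bound is routine: raise both sides to the $p$-th power, multiply by $w(x)$, integrate over $\Rn$, and take the $p$-th root. Because $w(x)dx$ is a nonnegative measure and $z \mapsto z^{p}$ is monotone on $[0,\infty)$, the inequality is preserved throughout and yields
\begin{equation*}
\| G_{\alpha,2^{j}}(f)\|_{L_{w}^{p}} \leq 2^{j\left(\frac{3n}{2}+\alpha\right)} \, \| G_{\alpha}(f)\|_{L_{w}^{p}},
\end{equation*}
with exactly the desired constant.

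There is no real obstacle in this argument: the substantive work has already been done in \cite{Wilson1}, where the scale factor $\beta^{\frac{3n}{2}+\alpha}$ is extracted from the scaling properties of the kernel family $C_{\alpha}$. The hypotheses $w \in A_{p}$ and $1 < p < \infty$ are not strictly needed for this lemma in isolation; they are stated in the form that will be used later, when the lemma is combined in the proofs of Theorems \ref{wuteo1.1}--\ref{wuteo1.4} with the $L^{p}_{w}$-boundedness of $G_{\alpha}$ itself (Theorem~A) and with Lemma~\ref{LinLU}. The only point worth checking is that the constant $\beta^{\frac{3n}{2}+\alpha}$ in Wilson's pointwise comparison is independent of $f$, which is clear from its derivation.
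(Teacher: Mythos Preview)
Your proof is correct and is exactly the argument the paper gives: it invokes Wilson's pointwise comparison $G_{\alpha,\beta}(f)(x)\le\beta^{\frac{3n}{2}+\alpha}G_{\alpha}(f)(x)$ with $\beta=2^{j}$ and then passes to the $L^{p}_{w}$ norm. The paper's own justification is the single sentence ``This lemma is easy from the following inequality which is proved in \cite{Wilson1}'', so your write-up is if anything more detailed than theirs.
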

This lemma is easy from the following inequality which is proved in \cite{Wilson1}.
$$
G_{\alpha,\beta}(f)(x)\leq\beta^{\frac{3n}{2}+\alpha}G_{\alpha}(f)(x).
$$
By the similar argument as in [3], we can get the following lemma.
\begin{lem}\label{wulem2.4}
Let $ 1<p<\i$, $0<\alpha\leq 1$ and $w \in A_{p}$, then the commutators $[b,G_{\alpha}]^k$ is bounded from $L_{w}^{p}(l_2)$ to itself whenever $b\in BMO$.
\end{lem}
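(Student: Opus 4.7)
The plan is to mimic, in the intrinsic-square setting, the weighted commutator argument of Ding--Lu \cite{DingLuY} (and its adaptation to $G_\alpha$ due to Wang \cite{Wang1}), and then promote the scalar bound to the vector-valued one by Minkowski's inequality inside the square-function cone.

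First I would prove a pointwise linearization. Using the binomial identity $[b(x)-b(z)]^{k}=\sum_{i=0}^{k}\binom{k}{i}(-1)^{i}(b(x)-c)^{k-i}(b(z)-c)^{i}$ for any constant $c$, the factor $(b(x)-c)^{k-i}$ pulls outside the $\phi$-convolution; what remains under the supremum is exactly $A_{\alpha}\bigl((b-c)^{i}f_{j}\bigr)(y,t)$. Taking $\ell^{2}(j)$-norm and then $L^{2}(\Gamma(x),t^{-n-1}dy\,dt)$-norm, both via Minkowski, yields the key pointwise bound
$$\bigl\|[b,G_{\alpha}]^{k}\vec f(x)\bigr\|_{l_{2}}\lesssim \sum_{i=0}^{k}|b(x)-c|^{k-i}\,\bigl\|G_{\alpha}\bigl((b-c)^{i}\vec f\bigr)(x)\bigr\|_{l_{2}}.$$

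Next, I would fix $c=b_{B,w}$ on an appropriate ball $B$ and take $L^{p}_{w}$-norm. By H\"older's inequality with a convenient splitting $\tfrac{1}{p}=\tfrac{1}{p_{1}}+\tfrac{1}{p_{2}}$, each term on the right separates into $\bigl\||b-c|^{k-i}\bigr\|_{L^{p_{1}}_{w}}$ times $\bigl\|G_{\alpha}\bigl((b-c)^{i}\vec f\bigr)\bigr\|_{L^{p_{2}}_{w}(l_{2})}$. The first factor is controlled via the weighted John--Nirenberg inequality (Remark after Lemma \ref{MuckWh}) and Lemma \ref{LinLU}(i); the second is estimated by Theorem A (the vector-valued $L^{p}_{w}$-boundedness of $G_{\alpha}$), again combined with Lemma \ref{LinLU}(i) to absorb the remaining $(b-c)^{i}$ factor. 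Tracking the total BMO weight gives the asserted bound with constant proportional to $\|b\|_{\ast}^{k}$.

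The main technical obstacle is the bookkeeping of two BMO factors against an $A_{p}$ weight: one must choose $p_{1},p_{2}$ so that $w$ lies in the correct Muckenhoupt class for the reduced operator on each side, and apply the sharp weighted John--Nirenberg estimate of Lemma \ref{LinLU} twice without losing the BMO exponent $k$. An alternative and strictly shorter route is Rubio de Francia's extrapolation theorem: the scalar $A_{p}$-weighted bound for $[b,G_{\alpha}]^{k}$ (i.e.\ the scalar case of Theorem B with $b\in BMO$) upgrades automatically to the vector-valued $L^{p}_{w}(l_{2})$ estimate claimed here, bypassing the Hölder/John--Nirenberg bookkeeping entirely.
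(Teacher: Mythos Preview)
The paper gives no proof of this lemma at all: it simply states ``By the similar argument as in [3], we can get the following lemma'' and moves on. So there is nothing to compare against beyond the references \cite{DingLuY} and \cite{Wang1} invoked earlier for Theorem~B.

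Your extrapolation remark at the end is correct and is the cleanest way to finish: the scalar $A_p$-weighted bound for $[b,G_\alpha]^k$ is exactly what \cite{Wang1} proves, and Rubio de Francia extrapolation then yields the $l_2$-valued inequality for every $1<p<\infty$ and every $w\in A_p$. That one sentence is a complete proof.

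Your primary ``direct'' route, however, has a real gap. The pointwise linearization
\[
\bigl\|[b,G_{\alpha}]^{k}\vec f(x)\bigr\|_{l_{2}}\lesssim \sum_{i=0}^{k}|b(x)-c|^{\,k-i}\,\bigl\|G_{\alpha}\bigl((b-c)^{i}\vec f\bigr)(x)\bigr\|_{l_{2}}
\]
is fine, but the next step --- ``fix $c=b_{B,w}$ on an appropriate ball $B$ and take $L^{p}_{w}$-norm'' --- cannot work as a \emph{global} estimate. For a single constant $c$, the factor $|b-c|^{k-i}$ lies in no $L^{p_1}_w(\Rn)$ for a generic $b\in BMO$, so the H\"older splitting produces an infinite factor. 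Lemma~\ref{LinLU} controls $\|b-b_{B,w}\|_{L^{p_1}_w(B)}$ only on the ball $B$, not on $\Rn$. The standard repair (and what \cite{DingLuY} actually does) is to localize via the Fefferman--Stein sharp maximal function: one lets $c=b_{B}$ vary with the ball $B$ in the definition of $M^{\#}$, proves a pointwise bound of the type
\[
M^{\#}\bigl(\|[b,G_\alpha]^k\vec f\|_{l_2}\bigr)(x)\lesssim \|b\|_{*}^{k}\sum_{i} M_{r_i}\bigl(\|G_\alpha\vec f\|_{l_2}\bigr)(x)+\cdots,
\]
and then invokes the weighted Fefferman--Stein inequality $\|g\|_{L^p_w}\lesssim\|M^{\#}g\|_{L^p_w}$ to close. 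Without that localization mechanism your H\"older/John--Nirenberg bookkeeping never gets off the ground. If you want a self-contained direct argument, rewrite step~2 as a sharp-maximal estimate; otherwise, just keep the extrapolation paragraph and delete the rest.
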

Now we are in a position to prove theorems.

\begin{lem}\label{lem3.3.WeightSq}
Let $1\le p<\infty$, $0<\alpha\leq 1$ and $w \in A_{p}$.

Then, for $p>1$  the inequality
\begin{equation*}\label{eq3.5.weightPot}
\|G_{\alpha} \vec{f}\|_{L_{w}^{p}(B,l_2)} \lesssim \big(w(B)\big)^{\frac{1}{p}} \int_{2r}^{\i}\|\vec{f}\|_{L_{w}^{p}\big(B(x_0,t),l_2\big)} \, \big(w(B(x_0,t))\big)^{-\frac{1}{p}} \, \frac{dt}{t}
\end{equation*}
holds for any ball $B=B(x_0,r)$ and for all $\vec{f}\in\Lplocl2$.

Moreover, for $p=1$ the inequality
\begin{equation*}\label{eq3.5.WSqweight}
\|G_{\alpha} \vec{f}\|_{WL_{w}^{1}(B,l_2)} \lesssim w(B) \int_{2r}^{\i}\|\vec{f}\|_{L_{w}^{1}\big(B(x_0,t),l_2\big)} \, \big(w(B(x_0,t))\big)^{-1} \, \frac{dt}{t},
\end{equation*}
holds for any ball $B=B(x_0,r)$ and for all $\vec{f}\in\L1locl2$.
\end{lem}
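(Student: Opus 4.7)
The strategy is the standard local/far decomposition: write $\vec{f} = \vec{f}_1 + \vec{f}_2$ with $\vec{f}_1 = \vec{f}\chi_{2B}$ and $\vec{f}_2 = \vec{f}\chi_{\Rn \setminus 2B}$, where $B = B(x_0,r)$, and handle the two pieces separately.

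For the local piece I would invoke Theorem A (respectively its weak-type counterpart when $p=1$) to obtain $\|G_\alpha \vec{f}_1\|_{L^p_w(B,l_2)} \le \|G_\alpha \vec{f}_1\|_{L^p_w(l_2)} \lesssim \|\vec{f}\|_{L^p_w(2B,l_2)}$, and then rewrite the right-hand side in tail-integral form by the classical trick: since $t \mapsto \|\vec{f}\|_{L^p_w(B(x_0,t),l_2)}$ is non-decreasing and $w \in A_p \subset \Delta_2$ (Lemma \ref{Graf}), one has $\|\vec{f}\|_{L^p_w(2B,l_2)} \thickapprox w(B)^{1/p}\,w(2B)^{-1/p}\|\vec{f}\|_{L^p_w(2B,l_2)} \lesssim w(B)^{1/p}\int_{2r}^{4r}w(B(x_0,t))^{-1/p}\|\vec{f}\|_{L^p_w(B(x_0,t),l_2)}\frac{dt}{t}$, which is dominated by $w(B)^{1/p}$ times the full tail integral on $(2r,\infty)$.

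The main work is the far piece, for which I would first establish the pointwise estimate $\|G_\alpha \vec{f}_2(x)\|_{l_2} \lesssim \int_{\Rn \setminus 2B}\|\vec{f}(z)\|_{l_2}\,|x_0-z|^{-n}\,dz$ for $x \in B$. Its derivation uses the uniform-in-$\phi$ kernel bound $|\phi_t(y-z)| \le t^{-n}\chi_{|y-z|\le t}$, which yields $|A_\alpha f_{2,j}(t,y)| \le t^{-n}\int_{z \notin 2B,\,|y-z|\le t}|f_j(z)|\,dz$ for every coordinate $j$; an $l_2$-Minkowski step upgrades this to $\|A_\alpha \vec{f}_2(t,y)\|_{l_2} \le t^{-n}\int_{z \notin 2B,\,|y-z|\le t}\|\vec{f}(z)\|_{l_2}\,dz$, and a second Minkowski in $L^2\!\left(\Gamma(x),\,dy\,dt/t^{n+1}\right)$ pulls the $z$-integral outside. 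The remaining geometric integral over $\Gamma(x)\cap\{|y-z|\le t\}$ is controlled by noting that $|x-y|<t$ and $|y-z|\le t$ force $t > |x-z|/2$ while the $y$-slice has measure $\lesssim t^n$, which produces $\lesssim |x-z|^{-2n} \thickapprox |x_0-z|^{-2n}$ (using $|x-z| \thickapprox |x_0-z|$ for $x \in B$, $z \in \Rn \setminus 2B$).

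To finish, I would write $|x_0-z|^{-n} = n\int_{|x_0-z|}^\infty t^{-n-1}\,dt$, interchange the order of integration, and apply H\"older together with the standard $A_p$-bound $\|w^{-1/p}\|_{L^{p'}(B(x_0,t))} \lesssim |B(x_0,t)|\,w(B(x_0,t))^{-1/p}$ (and its $A_1$ analogue when $p=1$) to convert the $L^1$-integral of $\|\vec{f}\|_{l_2}$ on $B(x_0,t)$ into the desired $L^p_w$-norm; the $t^n$ from $|B(x_0,t)|$ cancels the $t^{-n-1}$, leaving $\|G_\alpha\vec{f}_2(x)\|_{l_2} \lesssim \int_{2r}^\infty w(B(x_0,t))^{-1/p}\|\vec{f}\|_{L^p_w(B(x_0,t),l_2)}\frac{dt}{t}$. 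Since the right-hand side is independent of $x \in B$, taking the $L^p_w(B,l_2)$-norm simply produces the factor $w(B)^{1/p}$; in the weak $p=1$ case, a constant bound $C$ on $B$ gives $\|C\chi_B\|_{WL^1_w(l_2)} \le C\,w(B)$, which yields the weak-type formulation. The main obstacle is the pointwise estimate for $\vec{f}_2$: the sup over $\phi \in C_\alpha$ in the definition of $A_\alpha$ cannot be interchanged with the $l_2$-sum, so one must first apply the uniform-in-$\phi$ size bound on $\phi_t$ to each $A_\alpha f_{2,j}$ individually and only then invoke Minkowski in $l_2$ and in $L^2(\Gamma(x),dy\,dt/t^{n+1})$.
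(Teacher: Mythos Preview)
Your proposal is correct and follows essentially the same route as the paper: the same $\vec{f}_1/\vec{f}_2$ split, Theorem~A on the local part, the same uniform kernel bound $|\phi_t|\le t^{-n}\chi_{B(0,t)}$ followed by Minkowski in $l_2$ and in $L^2(\Gamma(x))$ for the far part, and the same $|x_0-z|^{-n}=n\int_{|x_0-z|}^\infty t^{-n-1}\,dt$ plus H\"older/$A_p$ conversion to the tail integral. The only cosmetic difference is in rewriting the local bound $\|\vec f\|_{L^p_w(2B,l_2)}$ as a tail integral: the paper uses the identity $|B|\int_{2r}^\infty t^{-n-1}\,dt\thickapprox 1$ together with the $A_p$ relation $\|w^{-1/p}\|_{L^{p'}(B)}\lesssim |B|\,w(B)^{-1/p}$, whereas you invoke doubling and restrict to $\int_{2r}^{4r}$; both are equivalent here.
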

\begin{proof}
The main ideas of these proofs come from \cite{GulEMJ2012}.
For arbitrary $x \in\Rn$, set $B=B(x_0,r)$, $2B\equiv B(x_0,2r)$. We decompose $\vec{f}=\vec{f}_{0}+\vec{f}_{\i}$, where $\vec{f}_{0}(y)=\vec{f}(y)\chi_{2B}(y)$, $\vec{f}_{\i}(y)=\vec{f}(y)-\vec{f}_{0}(y)$.
Then,
$$
\| G_{\alpha}\vec{f}\|_{L_{w}^{p}\big(B(x_0,r),l_2\big)}\leq\| G_{\alpha}\vec{f}_{0}\|_{L_{w}^{p}\big(B(x_0,r),l_2\big)}+\| G_{\alpha}\vec{f}_{\i}\|_{L^{p}\big(B(x_0,r),l_2\big)}:=I+II.
$$

First, let us estimate I. By Theorem $\mathrm{A}$, we can obtain that
\begin{equation}\label{wueq5X}
I\displaystyle \leq \| G_{\alpha}\vec{f}_{0}\|_{L_{w}^{p}(l_2)} \lesssim \| \vec{f}_{0}\|_{L_{w}^{p}(l_2)} = \| \vec{f}\|_{L_{w}^{p}(2B,l_2)}.
\end{equation}

On the other hand,
\begin{align}  \label{sal01}
\| \vec{f}\|_{L_{w}^{p}(2B,l_2)} & \thickapprox |B| \| \vec{f}\|_{L_{w}^{p}(2B,l_2)} \int_{2r}^{\i}\frac{dt}{t^{n+1}} \notag
\\
& \le |B| \int_{2r}^{\i} \| \vec{f}\|_{L_{w}^{p}\big(B(x_0,t),l_2\big)} \frac{dt}{t^{n+1}}
\\
& \lesssim w(B)^{\frac{1}{p}} \|w^{-1/p}\|_{L_{p'}(B)} \, \int_{2r}^{\i} \| \vec{f}\|_{L_{w}^{p}\big(B(x_0,t),l_2\big)} \, \frac{dt}{t^{n+1}}    \notag
\\
& \lesssim w(B)^{\frac{1}{p}} \int_{2r}^{\i} \| \vec{f}\|_{L_{w}^{p}\big(B(x_0,t),l_2\big)} \, \|w^{-1/p}\|_{L_{p'}(B(x_0,t))} \, \frac{dt}{t^{n+1}} \notag
\\
& \lesssim  \, w(B)^{\frac{1}{p}} \, \int_{2r}^{\i} \| \vec{f}\|_{L_{w}^{p}\big(B(x_0,t),l_2\big)} \, \big(w(B(x_0,t))\big)^{-\frac{1}{p}} \, \frac{dt}{t}. \notag
\end{align}

Therefore from \eqref{wueq5X} and \eqref{sal01} we get
\begin{equation}\label{wueq5}
I\displaystyle \lesssim  \, w(B)^{\frac{1}{p}} \, \int_{2r}^{\i} \| \vec{f}\|_{L_{w}^{p}\big(B(x_0,t),l_2\big)} \, \big(w(B(x_0,t))\big)^{-\frac{1}{p}} \, \frac{dt}{t}.
\end{equation}

Then let us estimate II.
$$
\|\vec{f}*\displaystyle \phi_{t}(y)\|_{l_2} = \left\|t^{-n}\int_{|y-z|\leq t}\phi(\frac{y-z}{t})\vec{f}_{\i}(z)dz\right\|_{l_2}
\leq t^{-n} \int_{|y-z|\leq t} \|\vec{f}_{\i}(z) \|_{l_2} dz.
$$
Since $x\in B(x_0,r)$, $(y,t)\in\Gamma(x)$, we have $|z-x|\leq|z-y|+|y-x|\leq 2t$, and
$$
r\leq|z-x_0|-|x_0-x|\leq|x-z|\leq|x-y|+|y-z|\leq 2t.
$$
So, we obtain
\begin{align*}
\big\|G_{\alpha} \vec{f}_{\i}(x)\big\|_{l_2} &\leq \left(\int \int_{\Gamma(x)}\left(t^{-n}\int_{|y-z|\leq t} \|\vec{f}_{\i}(z)\|_{l_2} dz\right)^{2}\frac{dydt}{t^{n+1}}\right)^{\frac{1}{2}}
\\
{}&\leq \left(\int_{t>r/2}\int_{|x-y|<t}\left(\int_{|x-z|\leq 2t} \|\vec{f}_{\i}(z)\|_{l_2} dz\right)^{2}\frac{dydt}{t^{3n+1}}\right)^{\frac{1}{2}}
\\
{}&\lesssim \left(\int_{t>r/2}\left(\int_{|z-x|\leq 2t} \|\vec{f}_{\i}(z)\|_{l_2} dz\right)^{2}\frac{dt}{t^{2n+1}}\right)^{\frac{1}{2}}.
\end{align*}
By Minkowski and H\"{o}lder's inequalities and $|z-x|\displaystyle \geq|z-x_0|-|x_0-x|\geq\frac{1}{2}|z-x_0|$, we have
\begin{align} \label{pekin1}
\big\|G_{\alpha} \vec{f}_{\i}(x)\big\|_{l_2} & \lesssim \int_{\Rn}\left(\int_{t>\frac{|z-x|}{2}}\frac{dt}{t^{2n+1}}\right)^{\frac{1}{2}} \|\vec{f}_{\i}(z)\|_{l_2} dz  \notag
\\
{}&\lesssim\int_{|z-x_0|>2r}\frac{\|\vec{f}(z)\|_{l_2} }{|z-x|^{n}}dz\lesssim\int_{|z-x_0|>2r}\frac{\|\vec{f}(z)\|_{l_2} }{|z-x_0|^{n}}dz   \notag
\\
{}&=\int_{|z-x_0|>2r} \|\vec{f}(z)\|_{l_2} \int_{|z-x_0|}^{+\infty}\frac{dt}{t^{n+1}}dz     \notag
\\
{}&=\int_{2r}^{+\infty}\int_{2r<|z-x_0|<t} \|\vec{f}(z)\|_{l_2} dz \frac{dt}{t^{n+1}} \notag
\\
& \lesssim \int_{2r}^{\infty} \| \|\vec{f}(z)\|_{l_2} \|_{L_{w}^{p}(B(x_0,t))} \, \|w^{-1}\|_{L_{p'}(B(x_0,t))} \, \frac{dt}{t^{n+1}}  \notag
\\
& \lesssim \int_{2r}^{\infty} \| \vec{f} \|_{L_{w}^{p}\big(B(x_0,t),l_2\big)} \, \big(w(B(x_0,t))\big)^{-\frac{1}{p}} \, \frac{dt}{t}.
\end{align}
Thus,
\begin{equation}\label{wueq6}
\| G_{\alpha}\vec{f}_{\i}\|_{L_{w}^{p}(B,l_2)}\lesssim w(B)^{\frac{1}{p}} \, \int_{2r}^{\infty} \| \vec{f} \|_{L_{w}^{p}\big(B(x_0,t),l_2\big)} \, \big(w(B(x_0,t))\big)^{-\frac{1}{p}} \, \frac{dt}{t}.
\end{equation}
By combining \eqref{wueq5} and \eqref{wueq6}, we have
$$
\| G_{\alpha}\vec{f}\|_{L_{w}^{p}(B,l_2)} \lesssim w(B)^{\frac{1}{p}} \, \int_{2r}^{\infty} \| \vec{f} \|_{L_{w}^{p}\big(B(x_0,t),l_2\big)} \, \big(w(B(x_0,t))\big)^{-\frac{1}{p}} \, \frac{dt}{t}.
$$
\end{proof}

\textbf{Proof of Theorem \ref{wuteo1.1}}

By Lemma \ref{lem3.3.WeightSq} and Theorem \ref{thm3.2.XX} we have for $p>1$
\begin{align*}
\| G_{\alpha}\vec{f}\|_{M^{p,\varphi_2}_{w}(l_2)} & \lesssim\sup\limits_{x_0\in \Rn,r>0}\varphi_{2}(x_0,\ r)^{-1}\, \int_{r}^{\infty}
\| \vec{f} \|_{L_{w}^{p}\big(B(x_0,t),l_2\big)} \, \big(w(B(x_0,t))\big)^{-\frac{1}{p}} \, \frac{dt}{t}
\\
&=\sup\limits_{x_0\in \Rn,r>0}\varphi_{2}(x_0,\ r)^{-1} \int_{0}^{r^{-1}} \| \vec{f} \|_{L_{w}^{p}\big(B(x_0,t^{-1}),l_2\big)} \, \big(w(B(x_0,t^{-1}))\big)^{-\frac{1}{p}} \, \frac{dt}{t}
\\
&=\sup\limits_{x_0\in \Rn,r>0}\varphi_{2}(x_0,r^{-1})^{-1} \, r \, \frac{1}{r} \, \int_{0}^{r} \| \vec{f} \|_{L_{w}^{p}\big(B(x_0,t^{-1}),l_2\big)}
\, \big(w(B(x_0,t^{-1}))\big)^{-\frac{1}{p}} \, \frac{dt}{t}
\\
& \lesssim \sup\limits_{x_0\in \Rn,r>0} \varphi_{1}(x_0,r^{-1})^{-1} \, \big(w(B(x_0,r^{-1}))\big)^{-\frac{1}{p}} \, \| \vec{f} \|_{L_{w}^{p}\big(B(x_0,r^{-1}),l_2\big)}
\\
& = \sup\limits_{x_0\in \Rn, r>0} \varphi_{1}(x_0,r)^{-1} \, \big(w(B(x_0,r))\big)^{-\frac{1}{p}} \, \| \vec{f} \|_{L_{w}^{p}\big(B(x_0,r),l_2\big)} = \|\vec{f}\|_{M^{p,\varphi_1}_{w}(l_2)}
\end{align*}
and for $p=1$
\begin{align*}
\| G_{\alpha}\vec{f}\|_{WM^{1,\varphi_2}_{w}(l_2)} & \lesssim\sup\limits_{x_0\in \Rn,r>0} \varphi_{2}(x_0,r)^{-1}\, \int_{r}^{\infty}
\| \vec{f} \|_{L_{w}^{1}\big(B(x_0,t),l_2\big)} \, \big(w(B(x_0,t))\big)^{-1} \, \frac{dt}{t}
\\
&=\sup\limits_{x_0\in \Rn,r>0}\varphi_{2}(x_0,\ r)^{-1} \int_{0}^{r^{-1}} \| \vec{f} \|_{L_{w}^{1}\big(B(x_0,t^{-1}),l_2\big)} \, \big(w(B(x_0,t^{-1}))\big)^{-1} \, \frac{dt}{t}
\\
&=\sup\limits_{x_0\in \Rn,r>0}\varphi_{2}(x_0,r^{-1})^{-1} \, r \, \frac{1}{r} \, \int_{0}^{r} \| \vec{f} \|_{L_{w}^{1}\big(B(x_0,t^{-1}),l_2\big)}
\, \big(w(B(x_0,t^{-1}))\big)^{-1} \, \frac{dt}{t}
\\
& \lesssim \sup\limits_{x_0\in \Rn,r>0} \varphi_{1}(x_0,r^{-1})^{-1} \, \big(w(B(x_0,r^{-1}))\big)^{-1} \, \| \vec{f} \|_{L_{w}^{1}\big(B(x_0,r^{-1}),l_2\big)}
\\
& = \sup\limits_{x_0\in \Rn, r>0} \varphi_{1}(x_0,r)^{-1} \, \big(w(B(x_0,r))\big)^{-1} \, \| \vec{f} \|_{L_{w}^{1}\big(B(x_0,r),l_2\big)} = \|\vec{f}\|_{M^{1,\varphi_1}_{w}(l_2)}.
\end{align*}

\

\begin{lem}\label{lem3.3.Weightga}
Let $1\le p<\infty$, $0<\alpha\leq 1$, $\lambda >3+\displaystyle \frac{\alpha}{n}$ and $w \in A_{p}$.
Then, for $p>1$  the inequality
\begin{equation*} \label{pekin02}
\big\|\mathrm{g}_{\lambda,\alpha}^{*}(\vec{f})\big\|_{L_{w}^{p}\big(B,l_2\big)} \lesssim \big(w(B)\big)^{\frac{1}{p}} \int_{2r}^{\i} \| \vec{f} \|_{L_{w}^{p}\big(B(x_0,t),l_2\big)} \, \big(w(B(x_0,t))\big)^{-\frac{1}{p}} \, \frac{dt}{t}
\end{equation*}
holds for any ball $B=B(x_0,r)$ and for all $\vec{f}\in\Lplocl2$.

Moreover, for $p=1$ the inequality
\begin{equation*}\label{pekin02W}
\big\|\mathrm{g}_{\lambda,\alpha}^{*}(\vec{f})\big\|_{WL_{w}^{1}\big(B,l_2\big)} \lesssim w(B) \int_{2r}^{\i} \| \vec{f} \|_{L_{w}^{1}\big(B(x_0,t),l_2\big)} \, \big(w(B(x_0,t))\big)^{-1} \, \frac{dt}{t}
\end{equation*}
holds for any ball $B=B(x_0,r)$ and for all $\vec{f}\in\L1locl2$.
\end{lem}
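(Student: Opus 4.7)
\textbf{Proof proposal for Lemma \ref{lem3.3.Weightga}.} The plan is to mirror the structure of the proof of Lemma \ref{lem3.3.WeightSq} step by step, with the additional twist that the $\left(\frac{t}{t+|x-y|}\right)^{n\lambda}$ weight in $g_{\lambda,\alpha}^{*}$ forces a dyadic cone decomposition. Fix a ball $B=B(x_0,r)$ and write $\vec f=\vec f_0+\vec f_\i$ with $\vec f_0=\vec f\,\chi_{2B}$. Then
$$
\bigl\|g_{\lambda,\alpha}^{*}(\vec f)\bigr\|_{L_{w}^{p}(B,l_2)}\le \bigl\|g_{\lambda,\alpha}^{*}(\vec f_0)\bigr\|_{L_{w}^{p}(B,l_2)} + \bigl\|g_{\lambda,\alpha}^{*}(\vec f_\i)\bigr\|_{L_{w}^{p}(B,l_2)} =: I+II.
$$

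For $I$, the local part is controlled by Theorem $\mathrm{A}$ (for $p>1$; by the weak-type analogue when $p=1$): $I \lesssim \|\vec f_0\|_{L_{w}^{p}(l_2)} = \|\vec f\|_{L_{w}^{p}(2B,l_2)}$. I would then convert this into the desired integral form by exactly the trick used in \eqref{sal01}, writing $1 \thickapprox |B|\int_{2r}^{\infty} t^{-n-1}\,dt$, enlarging the $L^{p}_w$ ball, and invoking H\"older with the $A_p$ weight to insert the factor $w(B(x_0,t))^{-1/p}$. This yields the required bound for $I$.

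For $II$, the main step is to decompose $\mathbb{R}_+^{n+1}$ into the cone $\Gamma(x)$ and the annular cones $\Gamma_{2^j}(x)\setminus\Gamma_{2^{j-1}}(x)$ for $j\ge 1$; on the $j$-th annulus one has $t+|x-y|\thickapprox 2^j t$, so
$$
g_{\lambda,\alpha}^{*}(\vec f_\i)(x)^{2} \lesssim \sum_{j\ge 0} 2^{-jn\lambda}\,G_{\alpha,2^{j}}(\vec f_\i)(x)^{2}.
$$
For each $j$, I would repeat the pointwise estimate carried out in \eqref{pekin1}: the support condition $\|\vec f_\i(z)\|_{l_2}=0$ for $|z-x_0|\le 2r$, together with $|x-z|<(2^j+1)t\le 2^{j+1}t$ whenever $(y,t)\in\Gamma_{2^j}(x)$ and $|y-z|<t$, gives, after Minkowski's integral inequality and integrating in $t$ from $|x-z|/2^{j+1}$, a bound of the form
$$
\|G_{\alpha,2^{j}}(\vec f_\i)(x)\|_{l_2} \lesssim 2^{j(3n/2+\alpha)} \int_{2r}^{\i}\|\vec f\|_{L_{w}^{p}(B(x_0,t),l_2)}\,w(B(x_0,t))^{-\frac{1}{p}}\,\frac{dt}{t}
$$
(equivalently, combine the pointwise bound $G_{\alpha,2^j}\lesssim 2^{j(3n/2+\alpha)}G_\alpha$ of \cite{Wilson1} quoted before Lemma \ref{wulem2.3} with the estimate already proved in Lemma \ref{lem3.3.WeightSq}). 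Summing the geometric series $\sum_j 2^{-jn\lambda/2}\,2^{j(3n/2+\alpha)}$, whose convergence is exactly the content of the hypothesis $\lambda>3+\alpha/n$, yields
$$
\|g_{\lambda,\alpha}^{*}(\vec f_\i)(x)\|_{l_2}\lesssim \int_{2r}^{\i}\|\vec f\|_{L_{w}^{p}(B(x_0,t),l_2)}\,w(B(x_0,t))^{-\frac{1}{p}}\,\frac{dt}{t}
$$
pointwise in $x\in B$, and integrating in $L^{p}_{w}(B)$ (resp.\ $WL^{1}_{w}(B)$) supplies the factor $w(B)^{1/p}$ (resp.\ $w(B)$).

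The main technical obstacle is the bookkeeping in the annular decomposition: one must carefully combine the $2^{jn}$ factor coming from the enlarged integration in $y$, the $2^{jn\alpha/n}$ factor from the Hölder regularity of $\phi\in C_\alpha$ (implicit in the $2^{j(3n/2+\alpha)}$ bound), and the $2^{-jn\lambda}$ decay supplied by the $g_\lambda^{*}$ weight, so that the geometric series in $j$ converges under the stated assumption on $\lambda$. Once this summation is done, the remainder of the argument is completely parallel to the proof of Lemma \ref{lem3.3.WeightSq}, and in particular the $p=1$ case follows the same pattern since all estimates on $\vec f_\i$ are pointwise and independent of $p$.
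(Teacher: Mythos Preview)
Your proposal is correct and uses the same essential ingredients as the paper's proof---the dyadic cone decomposition of $g_{\lambda,\alpha}^{*}$ into $G_{\alpha,2^{j}}$, the local/global splitting $\vec f=\vec f_{0}+\vec f_{\infty}$, the pointwise estimate on the far-away piece, and the geometric-series summation under $\lambda>3+\alpha/n$---but you organize them in a slightly different order. The paper cone-decomposes $g_{\lambda,\alpha}^{*}(\vec f)$ \emph{first} (obtaining inequality \eqref{wueq7}), and only afterwards splits $\vec f=\vec f_{0}+\vec f_{\infty}$ \emph{inside} each $\|G_{\alpha,2^{j}}(\vec f)\|_{L_{w}^{p}(B,l_2)}$, using Lemma~\ref{wulem2.3} for the local piece; you split first and invoke Theorem~A directly for $g_{\lambda,\alpha}^{*}(\vec f_{0})$, which spares you the cone decomposition on the local part altogether. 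Your alternative of handling $G_{\alpha,2^{j}}(\vec f_{\infty})$ via Wilson's pointwise bound $G_{\alpha,2^{j}}\le 2^{j(3n/2+\alpha)}G_{\alpha}$ combined with \eqref{pekin1} is a clean shortcut; the paper instead redoes the pointwise computation from scratch for each aperture (obtaining only the factor $2^{3jn/2}$ on $\vec f_{\infty}$, cf.\ \eqref{wueq11}), though the combined bound \eqref{wueq11s1} is the same. Either ordering works and both buy the same conclusion.
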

\begin{proof}
From the definition of $\mathrm{g}_{\lambda,\alpha}^{*}(f)$, we readily see that
\begin{align*}
\big\|\mathrm{g}_{\lambda,\alpha}^{*}(\vec{f})(x)\big\|_{l_2} & = \Big\|\Big(\int_{0}^{\infty}\int_{\Rn}\left(\frac{t}{t+|x-y|}\right)^{n\lambda} \Big(A_{\alpha}\vec{f}(y,t)\Big)^2\frac{dydt}{t^{n+1}}\Big)^{l/2}\Big\|_{l_2}
\\
& \le \Big\| \Big(\int_{0}^{\infty}\int_{|x-y|<t}\left(\frac{t}{t+|x-y|}\right)^{n\lambda} \Big(A_{\alpha}\vec{f}(y,t)\Big)^2\frac{dydt}{t^{n+1}}\Big)^{l/2}\Big\|_{l_2}
\\
& + \Big\|  \Big(\int_{0}^{\infty}\int_{|x-y|\geq t}\left(\frac{t}{t+|x-y|}\right)^{n\lambda} \Big(A_{\alpha}\vec{f}(y,t)\Big)^2\frac{dydt}{t^{n+1}}\Big)^{l/2}\Big\|_{l_2}
\\
&: = III+IV.
\end{align*}
First, let us estimate III.
$$
III\leq \Big\| \Big(\int_{0}^{\infty}\int_{|x-y|<t}\left(\frac{t}{t+|x-y|}\right)^{n\lambda} \Big(A_{\alpha}\vec{f}(y,t)\Big)^2\frac{dydt}{t^{n+1}}\Big)^{l/2}\Big\|_{l_2} \leq \big\|G_{\alpha}\vec{f}(x) \big\|_{l_2}.
$$
Now, let us estimate IV.
\begin{align*}
IV &\leq \Big\| \Big( \sum_{j=1}^{\infty}\int_{0}^{\infty}\int_{2^{j-1}t\leq|x-y|\leq 2^{j}t}\left(\frac{t}{t+|x-y|}\right)^{n\lambda} \Big(A_{\alpha}\vec{f}(y,t)\Big)^2\frac{dydt}{t^{n+1}}\Big)^{l/2}\Big\|_{l_2}
\\
{}& \lesssim \Big\| \Big( \sum_{j=1}^{\infty}\int_{0}^{\infty}\int_{2^{j-1}t\leq|x-y|\leq 2^{j}t} 2^{-jn\lambda}
\, \Big(A_{\alpha}\vec{f}(y,t)\Big)^2\frac{dydt}{t^{n+1}}\Big)^{l/2}\Big\|_{l_2}
\\
{}&\lesssim \sum_{j=1}^{\infty}2^{-jn\lambda} \, \Big\| \Big( \int_{0}^{\infty}\int_{|x-y|\leq 2^{j}t} \Big(A_{\alpha}\vec{f}(y,t)\Big)^2\frac{dydt}{t^{n+1}}\Big)^{l/2}\Big\|_{l_2}
\\
{}&:=\sum_{j=1}^{\infty}2^{-jn\lambda} \Big\|G_{\alpha,2^{j}}(\vec{f})(x)\Big\|_{l_2}.
\end{align*}
Thus,
\begin{equation}\label{wueq7}
\| \mathrm{g}_{\lambda,\alpha}^{*}(\vec{f}) \|_{L_{w}^{p}\big(B,l_2\big)}\leq\| G_{\alpha}\vec{f} \displaystyle \|_{L_{w}^{p}\big(B,l_2\big)}+\sum_{j=1}^{\i} 2^{-\frac{jn\lambda}{2}}\| G_{\alpha,2^{j}}(\vec{f}) \|_{L_{w}^{p}\big(B,l_2\big)}.
\end{equation}
By Lemma \ref{lem3.3.WeightSq}, we have
\begin{equation}\label{wueq8FD}
\| G_{\alpha}\vec{f}\|_{L_{w}^{p}\big(B,l_2\big)}\lesssim   \big(w(B)\big)^{\frac{1}{p}} \int_{2r}^{\i}\|\vec{f}\|_{L_{w}^{p}(B(x_0,t))} \, \big(w(B(x_0,t))\big)^{-\frac{1}{p}} \, \frac{dt}{t}.
\end{equation}
In the following, we will estimate $\| G_{\alpha,2^{j}}(\vec{f}) \|_{L_{w}^{p}\big(B,l_2\big)}$.
We divide $\| G_{\alpha,2^{j}}(\vec{f})\|_{L_{w}^{p}\big(B,l_2\big)}$ into two parts.
\begin{equation}\label{wueq9}
\| G_{\alpha,2^{j}}(\vec{f})\|_{L_{w}^{p}\big(B,l_2\big)}\leq\| G_{\alpha,2^{j}}(\vec{f}_{0})\|_{L_{w}^{p}\big(B,l_2\big)}+\| G_{\alpha,2^{j}}(\vec{f}_{\i})\|_{L_{w}^{p}\big(B,l_2\big)},
\end{equation}
where $\vec{f}_{0}(y)=\vec{f}(y)\chi_{2B}(y)$, $\vec{f}_{\i}(y)=\vec{f}(y)-\vec{f}_{\i}(y)$. For the first part, by Lemma \ref{wulem2.3},
\begin{align}\label{wueq10}
\| G_{\alpha,2^{j}}(\vec{f}_{0})\|_{L_{w}^{p}\big(B,l_2\big)} & \lesssim 2^{j(\frac{3n}{2}+\alpha)}\| G_{\alpha}(\vec{f}_{0})\|_{L_{w}^{p}(l_2)}\lesssim 2^{j(\frac{3n}{2}+\alpha)}\| f\|_{L_{w}^{p}\big(B,l_2\big)} \notag
\\
{}&\lesssim  2^{j(\frac{3n}{2}+\alpha)}w(B)^{\frac{1}{p}} \, \int_{2r}^{\infty}\| \vec{f}\|_{L_{w}^{p}\big(B(x_0,t),l_2)} \, \big(w(B(x_0,t))\big)^{-\frac{1}{p}} \, \frac{dt}{t}.
\end{align}
For the second part.
\begin{align*}
\Big\|G_{\alpha,2^{j}}(\vec{f}_{\i})(x)\Big\|_{l_2} & = \Big\|  \Big(\int_{0}^{\infty} \int_{|x-y|\leq 2^{j}t} \Big(A_{\alpha}\vec{f}(y,t)\Big)^2\frac{dydt}{t^{n+1}}\Big)^{l/2}\Big\|_{l_2}
\\
& =\Big\| \left(\int_{0}^{\infty}\int_{|x-y|\leq 2^{j}t}\left(\sup_{\phi\in C_{\alpha}}|\vec{f}*\phi_{t}(y)|\right)^{2}\frac{dydt}{t^{n+1}}\right)^{\frac{1}{2}} \Big\|_{l_2}
\\
& \leq \left(\int_{0}^{\infty}\int_{|x-y|\leq 2^{j}t}\left(\int_{|z-y|\leq t}\|\vec{f}_{\i}(z)\|_{l_2}dz\right)^{2}\frac{dydt}{t^{3n+1}}\right)^{\frac{1}{2}}.
\end{align*}
Since $|x-z|\leq|y-z|+|x-y|\leq 2^{j+1}t$, we get
\begin{align*}
\Big\|G_{\alpha,2^{j}}(\vec{f}_{\i})(x)\Big\|_{l_2} &\leq \left(\displaystyle \int_{0}^{\infty}\int_{|x-y|\leq 2^{j}t}\left(\int_{|x-z|\leq 2^{j+1}t}\|\vec{f}_{\i}(z)\|_{l_2}dz\right)^{2}\frac{dydt}{t^{3n+1}}\right)^{\frac{1}{2}}
\\
&\leq \left(\int_{0}^{\infty}\left(\int_{|z-x|\leq 2^{j+1}t} \|\vec{f}_{\i}(z)\|_{l_2} dz\right)^{2}\frac{2^{jn}dt}{t^{2n+1}}\right)^{\frac{1}{2}}
\\
&\leq 2^{\frac{jn}{2}}\int_{\Rn}\left(\int_{t\geq\frac{|x-z|}{2^{j+1}}} \|\vec{f}_{\i}(z)\|_{l_2}^{2}\frac{dt}{t^{2n+1}}\right)^{\frac{1}{2}}dz
\\
&\leq 2^{\frac{3jn}{2}}\int_{|x_0-z|>2r}\frac{\|\vec{f}(z)\|_{l_2}}{|x-z|^{n}}dz.
\end{align*}
For $|z-x|\displaystyle \geq|x_0-z|-|x-x_0|\geq|x_0-z|-\frac{1}{2}|x_0-z|=\frac{1}{2}|x_0-z|$, so by Fubini's theorem and H\"{o}lder's inequality, we obtain
\begin{align*}
\Big\|G_{\alpha,2^{j}}(\vec{f}_{\i})(x)\Big\|_{l_2} & \leq 2^{\frac{3jn}{2}}\int_{|x_0-z|>2r}\frac{\|\vec{f}(z)\|_{l_2}}{|x_0-z|^{n}}dz
\\
{}&= 2^{\frac{3jn}{2}}\int_{|x_0-z|>2r}\|\vec{f}(z)\|_{l_2}\int_{|x_0-z|}^{\infty}\frac{dt}{t^{n+1}}dz
\\
{}&\leq 2^{\frac{3jn}{2}}\int_{2r}^{\infty}\int_{|x_0-z|<t} \|\vec{f}(z)\|_{l_2} dz \frac{dt}{t^{n+1}}
\\
{}&\leq 2^{\frac{3jn}{2}}\int_{2r}^{\infty}\| \|\vec{f}(\cdot)\|_{l_2}\|_{L^{1}(B(x_0,t))}\frac{dt}{t^{n+1}}.
\\
& \leq 2^{\frac{3jn}{2}} \int_{2r}^{\infty} \| \vec{f}(\cdot)\|_{l_2} \|_{L_{w}^{p}(B(x_0,t))} \, \|w^{-1}\|_{L_{p'}(B(x_0,t))} \, \frac{dt}{t^{n+1}}
\\
& \leq 2^{\frac{3jn}{2}} \int_{2r}^{\infty} \| \vec{f}\|_{L_{w}^{p}\big(B(x_0,t),l_2\big)} \, \big(w(B(x_0,t))\big)^{-\frac{1}{p}} \, \frac{dt}{t}.
\end{align*}
So,
\begin{equation}\label{wueq11}
\| G_{\alpha,2^{j}}(\vec{f}_{\i})\|_{L_{w}^{p}\big(B,l_2\big)} \leq 2^{\frac{3jn}{2}}w(B)^{\frac{1}{p}} \, \int_{2r}^{\infty} \| \vec{f}\|_{L_{w}^{p}\big(B(x_0,t),l_2\big)} \, \big(w(B(x_0,t))\big)^{-\frac{1}{p}} \, \frac{dt}{t}.
\end{equation}
Combining \eqref{wueq9}, \eqref{wueq10} and \eqref{wueq11}, we have
\begin{equation}\label{wueq11s1}
\| G_{\alpha,2^{j}}(\vec{f})\|_{L_{w}^{p}\big(B,l_2\big)}\lesssim 2^{j(\frac{3n}{2}+\alpha)} \, w(B)^{\frac{1}{p}} \,
\int_{2r}^{\infty} \| \vec{f}\|_{L_{w}^{p}\big(B(x_0,t),l_2\big)} \, \big(w(B(x_0,t))\big)^{-\frac{1}{p}} \, \frac{dt}{t}.
\end{equation}

Thus,
\begin{equation}\label{wueq7B}
\| \mathrm{g}_{\lambda,\alpha}^{*}(\vec{f}) \|_{L_{w}^{p}\big(B,l_2\big)}\leq\| G_{\alpha}\vec{f} \|_{L_{w}^{p}\big(B,l_2\big)}+\sum_{j=1}^{\i} 2^{-\frac{jn\lambda}{2}}\| G_{\alpha,2^{j}}(\vec{f}) \|_{L_{w}^{p}\big(B,l_2\big)}.
\end{equation}

Since $\lambda >3+\displaystyle \frac{\alpha}{n}$, by \eqref{wueq8FD}, \eqref{wueq11s1} and \eqref{wueq7B}, we have the desired lemma.
\end{proof}

\

\textbf{Proof of Theorem \ref{wuteo1.2}}

From inequality \eqref{wueq7} we have
\begin{equation}\label{wueq7}
\| \mathrm{g}_{\lambda,\alpha}^{*}(\vec{f}) \|_{M^{p,\varphi_2}_{w}(l_2)}\leq\| G_{\alpha}\vec{f} \displaystyle \|_{M^{p,\varphi_2}_{w}(l_2)}+\sum_{j=1}^{\i} 2^{-\frac{jn\lambda}{2}}\| G_{\alpha,2^{j}}(\vec{f}) \|_{M^{p,\varphi_2}_{w}(l_2)}.
\end{equation}
By Theorem \ref{wuteo1.1}, we have
\begin{equation}\label{wueq8}
\| G_{\alpha}\vec{f}\|_{M^{p,\varphi_2}_{w}(l_2)}\lesssim\| \vec{f}\|_{M^{p,\varphi_1}_{w}(l_2)}.
\end{equation}
In the following, we will estimate $\| G_{\alpha,2^{j}}(\vec{f}) \|_{M^{p,\varphi_2}_{w}(l_2)}$.
Thus, by substitution of variables and Theorem \ref{thm3.2.XX}, we get
\begin{align}\label{wueq12}
&\| G_{\alpha,2^{j}}(\vec{f})\|_{M^{p,\varphi_2}_{w}(l_2)} \notag
\\
& \lesssim  2^{j(\frac{3n}{2}+\alpha)}\sup_{x_0\in \Rn,r>0}\varphi_{2}(x_0,r)^{-1} \int_{r}^{\infty}
\| \vec{f}\|_{L_{w}^{p}\big(B(x_0,t),l_2\big)} \, \big(w(B(x_0,t))\big)^{-\frac{1}{p}} \, \frac{dt}{t} \notag
\\
&=2^{j(\frac{3n}{2}+\alpha)}\displaystyle \sup_{x_0\in \Rn,r>0} \varphi_{2}(x_0,r^{-1})^{-1} \, r \, \frac{1}{r} \, \int_{0}^{r} \| \vec{f}\|_{L_{w}^{p}\big(B(x_0,t^{-1}),l_2\big)}
\, \big(w(B(x_0,t^{-1}))\big)^{-\frac{1}{p}} \, \frac{dt}{t}  \notag
\\
&\lesssim 2^{j(\frac{3n}{2}+\alpha)}\sup_{x_0\in \Rn,r>0} \varphi_{1}(x_0,r^{-1})^{-1} \, \big(w(B(x_0,r^{-1}))\big)^{-\frac{1}{p}} \, \| \vec{f}\|_{L_{w}^{p}\big(B(x_0,r^{-1}),l_2\big)} \notag
\\
&=2^{j(\frac{3n}{2}+\alpha)}\| \vec{f}\|_{M^{p,\varphi_1}_{w}(l_2)}.
\end{align}
Since $\lambda >3+\displaystyle \frac{\alpha}{n}$, by \eqref{wueq7}, \eqref{wueq8} and \eqref{wueq12}, we have the desired theorem.

\

\begin{lem}\label{lem3.3.ComWeightSq}
Let $1< p<\infty$, $0<\alpha\leq 1$, $w \in A_{p}$ and $b\in BMO$.

Then the inequality
\begin{equation*}
\|[b,G_{\alpha}]^k \vec{f}\|_{L_{w}^{p}\big(B,l_2\big)} \lesssim
\big(w(B)\big)^{\frac{1}{p}} \int_{2r}^{\i} \ln^k \Big(e+\frac{t}{r}\Big) \,  \| \vec{f}\|_{L_{w}^{p}\big(B(x_0,t),l_2\big)} \, \big(w(B(x_0,t))\big)^{-\frac{1}{p}} \, \frac{dt}{t}
\end{equation*}
holds for any ball $B=B(x_0,r)$ and for all $f\in\Lplocl2$.
\end{lem}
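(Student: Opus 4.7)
The plan is to mimic the strategy already used for Lemma~4.3 (the non-commutator estimate), but with the BMO weight introduced by the commutator. I would fix $B=B(x_0,r)$, write $\vec{f}=\vec{f}_0+\vec{f}_\infty$ with $\vec{f}_0=\vec{f}\chi_{2B}$, and split
$$
\|[b,G_\alpha]^k\vec{f}\|_{L^p_w(B,l_2)}\le \|[b,G_\alpha]^k\vec{f}_0\|_{L^p_w(B,l_2)}+\|[b,G_\alpha]^k\vec{f}_\infty\|_{L^p_w(B,l_2)}.
$$
For the local piece I would apply Lemma~\ref{wulem2.4} (the $L^p_w(l_2)$-boundedness of $[b,G_\alpha]^k$), obtaining
$$
\|[b,G_\alpha]^k\vec{f}_0\|_{L^p_w(B,l_2)}\lesssim \|b\|_*^k\,\|\vec{f}\|_{L^p_w(2B,l_2)},
$$
and then reproduce the chain \eqref{sal01} from the proof of Lemma~\ref{lem3.3.WeightSq} verbatim to dominate $\|\vec{f}\|_{L^p_w(2B,l_2)}$ by $w(B)^{1/p}\int_{2r}^\infty \|\vec{f}\|_{L^p_w(B(x_0,t),l_2)}\,w(B(x_0,t))^{-1/p}\,\tfrac{dt}{t}$, which is in turn bounded by the right-hand side since $\ln^k(e+t/r)\ge 1$.

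For the non-local piece I would use the binomial identity $[b(x)-b(z)]^k=\sum_{j=0}^k\binom{k}{j}(-1)^j(b(x)-b_B)^{k-j}(b(z)-b_B)^j$ inside the definition of $A^k_{\alpha,b}$, which yields the pointwise bound
$$
\|[b,G_\alpha]^k\vec{f}_\infty(x)\|_{l_2}\lesssim \sum_{j=0}^k |b(x)-b_B|^{k-j}\,\|G_\alpha\bigl((b-b_B)^j\vec{f}_\infty\bigr)(x)\|_{l_2}.
$$
For each $j$, the same pointwise computation as in \eqref{pekin1} (the non-local estimate in Lemma~\ref{lem3.3.WeightSq}) gives, for $x\in B$,
$$
\|G_\alpha\bigl((b-b_B)^j\vec{f}_\infty\bigr)(x)\|_{l_2}\lesssim \int_{2r}^\infty\!\int_{|z-x_0|<t}|b(z)-b_B|^j\,\|\vec{f}(z)\|_{l_2}\,dz\,\frac{dt}{t^{n+1}}.
$$
I would then apply H\"older with the weight $w$ together with Lemma~\ref{LinLU}(ii) and the $A_p$ identity $w^{1-p'}(B(x_0,t))^{1/p'}\lesssim |B(x_0,t)|\,w(B(x_0,t))^{-1/p}$ to turn the inner integral into
$$
\lesssim \|b\|_*^j\,\bigl(1+\ln(t/r)\bigr)^j\,\|\vec{f}\|_{L^p_w(B(x_0,t),l_2)}\,t^n\,w(B(x_0,t))^{-1/p}.
$$

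Finally I would take the $L^p_w(B)$-norm of the factor $|b(x)-b_B|^{k-j}$ and invoke \eqref{lem2.4.II} to obtain $\|b\|_*^{k-j}w(B)^{1/p}$; summing over $j$ and using the trivial monotonicity $\bigl(1+\ln(t/r)\bigr)^j\le C\,\ln^k(e+t/r)$ for $t>2r$ collapses every term into the single integral on the right-hand side of the claim. The one step requiring care is the weighted H\"older estimate for the BMO oscillation with the weight $w^{1-p'}$, since one must keep track of the $\ln(t/r)$ factor uniformly in $t\ge 2r$; this is precisely the content of Lemma~\ref{LinLU}(ii) and will constitute the main technical point of the proof.
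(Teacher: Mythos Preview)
Your proposal is correct and follows essentially the same strategy as the paper's proof: local/non-local split, $L^p_w(l_2)$-boundedness of $[b,G_\alpha]^k$ for the local piece via Lemma~\ref{wulem2.4}, and a BMO-centered decomposition combined with the pointwise estimate \eqref{pekin1} and Lemma~\ref{LinLU} for the non-local piece. The only cosmetic differences are that the paper centers at the weighted average $b_{B,w}$ (so that Lemma~\ref{LinLU} applies verbatim) and replaces your full binomial expansion by the cruder two-term bound $|b(x)-b(z)|^k\lesssim |b(x)-b_{B,w}|^k+|b(z)-b_{B,w}|^k$, yielding just the pieces $A(x)$ and $B(x)$; both routes give the same estimate.
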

\begin{proof} We decompose $\vec{f}=\vec{f}_{0}+\vec{f}_{\i}$, where $\vec{f}_{0}=\vec{f}\chi_{2B}$ and $\vec{f}_{\i}=\vec{f}-\vec{f}_{0}$. Then
$$
\|[b,G_{\alpha}]^k\vec{f}\|_{L_{w}^{p}\big(B,l_2\big)}\leq \|[b,G_{\alpha}]^k\vec{f}_{0}\|_{L_{w}^{p}\big(B,l_2\big)}+\|[b,G_{\alpha}]^k\vec{f}_{\i}\|_{L_{w}^{p}\big(B,l_2\big)}.
$$
By Lemma \ref{wulem2.4}, we have that
\begin{align*}
\|[b,\ G_{\alpha}]^k\vec{f}_{0}\|_{L_{w}^{p}\big(B,l_2\big)} & \lesssim \|b\|_{*}^k \, \| \vec{f}_{0}\|_{L_{w}^{p}(l_2)}= \|b\|_{*}^k \, \| \vec{f}\|_{L_{w}^{p}\big(2B,l_2\big)}
\\
& \lesssim \|b\|_{*}^k \, w(B)^{\frac{1}{p}} \, \int_{2r}^{\infty} \| \vec{f}\|_{L_{w}^{p}\big(B(x_0,t),l_2\big)} \, \, \big(w(B(x_0,t))\big)^{-\frac{1}{p}} \, \frac{dt}{t}.
\end{align*}
For the second part, we divide it into two parts.
\begin{align*}
\Big\|[b,G_{\alpha}]^k\vec{f}_{\i}(x)\Big\|_{l_2} &= \Big\|\Big(\displaystyle \int\int_{\Gamma(x)}\sup_{\phi\in C_{\alpha}}\Big|\int_{\Rn}[b(x)-b(z)]^k\phi_{t}(y-z)\vec{f}_{\i}(z)dz\Big|^{2}\frac{dydt}{t^{n+1}}\Big)^{\frac{1}{2}}\Big\|_{l_2}
\\
{}&\leq \Big\|\Big(\int\int_{\Gamma(x)}\sup_{\phi\in C_{\alpha}}\Big|\int_{\Rn}[b(x)-b_{B,w}]^k\phi_{t}(y-z)\vec{f}_{\i}(z)dz\Big|^{2}\frac{dydt}{t^{n+1}}\Big)^{\frac{1}{2}}\Big\|_{l_2}
\\
{}&{}+\Big\|\Big(\int\int_{\Gamma(x)}\sup_{\phi\in C_{\alpha}}\Big|\int_{\Rn}[b(z)-b_{B,w}]^k\phi_{t}(y-z)\vec{f}_{\i}(z)dz\Big|^{2}\frac{dydt}{t^{n+1}}\Big)^{\frac{1}{2}}\Big\|_{l_2}
\\
{}&:= A(x)+B(x).
\end{align*}
Therefore
$$
\|[b,G_{\alpha}]^k\vec{f}_{\i}\|_{L_{w}^{p}\big(B,l_2\big)} \le \|A(\cdot)\|_{L_{w}^{p}(B)} + \|B(\cdot)\|_{L_{w}^{p}(B)}.
$$

First, for $A(x)$, we find that
\begin{align*}
A(x) & = |b(x)-b_{B,w}|^k \, \Big\|\Big(\iint_{\Gamma(x)}\sup_{\phi \in C_{\alpha}} \Big|\int_{\Rn}\phi_{t}(y-z)\vec{f}_{\i}(z)dz\Big|^{2} \frac{dydt}{t^{n+1}}\Big)^{\frac{1}{2}}\Big\|_{l_2}
\\
& = \big|b(x)-b_{B,w}\big|^k \, \big\|G_{\alpha}\vec{f}_{\i}(x)\big\|_{l_2}.
\end{align*}
By Lemma \ref{LinLU} and from the inequality \eqref{pekin1}, we can get
\begin{align*}
\|A(\cdot)\|_{L_{w}^{p}(B)} & = \left(\int_{B}|b(x)-b_{B,w}|^{kp} \, \Big(\big\|G_{\alpha}\vec{f}_{\i}(x)\big\|_{l_2}\Big)^{p} \, w(x) dx\right)^{\frac{1}{p}}
\\
&\leq\left(\int_{B}|b(x)-b_{B,w}|^{kp} \, w(x) dx\right)^{\frac{1}{p}} \int_{2r}^{\infty} \| \vec{f}\|_{L_{w}^{p}\big(B(x_0,t),l_2\big)} \, \big(w(B(x_0,t))\big)^{-\frac{1}{p}} \, \frac{dt}{t}
\\
&\leq\| b\|_{*}^k w(B)^{\frac{1}{p}} \, \int_{2r}^{\infty} \| \vec{f}\|_{L_{w}^{p}\big(B(x_0,t),l_2\big)} \, \big(w(B(x_0,t))\big)^{-\frac{1}{p}} \, \frac{dt}{t}.
\end{align*}
For $B(x)$, since $|y-x|<t$, we get $|x-z|<2t$. Thus, by Minkowski's inequality,
\begin{align*}
B(x) &\leq \Big\|\Big( \int\int_{\Gamma(x)}\Big|\int_{|x-z|<2t}|b_{B,w}-b(z)|^k \, \vec{f}_{\i}(z)dz\Big|^{2}\frac{dydt}{t^{3n+1}}\Big)^{\frac{1}{2}}\Big\|_{l_2}
\\
&\lesssim \Big( \int_{0}^{\infty}\Big|\int_{|x-z|<2t}|b_{B,w}-b(z)|^k\, \big\|\vec{f}_{\i}(z)\big\|_{l_2} dz\Big|^{2}\frac{dt}{t^{2n+1}}\Big)^{\frac{1}{2}}
\\
& \leq  \int_{|x_0-z|>2r}|b_{B,w}-b(z)|^k \, \big\|\vec{f}(z)\big\|_{l_2} \, \frac{dz}{|x-z|^{n}}
\end{align*}
For $B(x)$, using the inequality $|z-x| \geq\frac{1}{2}|z-x_0|$, we have
\begin{align*}
B(x) & \lesssim \int_{|x_0-z|>2r} |b(z)-b_{B,w}|^k \, \big\|\vec{f}(z)\big\|_{l_2} \, \frac{dz}{|x_0-z|^{n}}
\\
& \lesssim \int_{|x_0-z|>2r}|b(z)-b_{B,w}|^k \, \big\|\vec{f}(z)\big\|_{l_2} \, \int_{|x_0-z|}^{\i}\frac{dt}{t^{n+1}}
\\
& \lesssim \int_{2r}^{\i}\int_{2r\leq |x_0-z|\leq t} |b(z)-b_{B,w}|^k \, \big\|\vec{f}(z)\big\|_{l_2} \, dz\frac{dt}{t^{n+1}}.
\end{align*}

Applying H\"older's inequality and by Lemma \ref{LinLU}, we get
\begin{align*}
\|B(\cdot)\|_{L_{w}^{p}(B)} & \lesssim w(B)^{\frac{1}{p}} \, \int_{2r}^{\i} \left(\int_{B(x_0,t)}|b(z)-b_{B,w}|^{kp'} w(z)^{1-p'}dz\right)^{\frac{1}{p'}} \|\|\vec{f}(\cdot)\|_{l_2}\|_{L^{p}_{w}(B(x_0,t))}\frac{dt}{t^{n+1}}
\\
& \lesssim \|b\|_{*} \, w(B)^{\frac{1}{p}} \, \int_{2r}^{\i}\Big(1+\ln^k \frac{t}{r}\Big) \, \|w^{-1/p}\|_{L_{p'}(B(x,t))} \,
\| \vec{f}\|_{L_{w}^{p}\big(B(x_0,t),l_2\big)} \, \frac{dt}{t^{n+1}}
\\
& \lesssim \|b\|_{*} w(B)^{\frac{1}{p}} \, \int_{2r}^{\i}\ln^k \Big(e+\frac{t}{r}\Big) \, \| \vec{f}\|_{L_{w}^{p}\big(B(x_0,t),l_2\big)} \, w(B(x_0,t))^{-1/p} \, \frac{dt}{t}.
\end{align*}

Thus,
$$
\big\|[b,\ G_{\alpha}]^k\vec{f}\big\|_{L_{w}^{p}\big(B,l_2\big)} \lesssim \|b\|_{*} \,  w(B)^{\frac{1}{p}} \, \int_{2r}^{\i}\ln^k \Big(e+\frac{t}{r}\Big) \, \| \vec{f}\|_{L_{w}^{p}\big(B(x_0,t),l_2\big)} \, w(B(x_0,t))^{-1/p} \, \frac{dt}{t}.
$$
\end{proof}
\

\textbf{Proof of Theorem \ref{wuteo1.3}}

By substitution of variables, we obtain
\begin{align*}
&\|[b,G_{\alpha}]^k\vec{f}\|_{M^{p,\varphi_2}_{w}(l_2)}
\\
&\lesssim \|b\|_{*} \, \sup_{x_0\in \Rn,r>0}\varphi_{2}(x_0,r)^{-1} \, \int_{2r}^{\i}\ln^k \Big(e+\frac{t}{r}\Big) \, \| \vec{f}\|_{L_{w}^{p}\big(B(x_0,t),l_2\big)} \, w(B(x_0,t))^{-1/p} \, \frac{dt}{t}
\\
& \lesssim \|b\|_{*} \, \sup_{x_0\in \Rn,r>0}\varphi_{2}(x_0,r)^{-1} \, \int_0^{r^{-1}} \ln^k \Big(e+\frac{1}{tr}\Big) \, \| \vec{f}\|_{L_{w}^{p}\big(B(x_0,t^{-1}),l_2\big)} \, w(B(x_0,t^{-1}))^{-\frac{1}{p}} \, \frac{dt}{t}
\\
&=\sup_{x\in\Rn,\,r>0} \|b\|_{*} \, \varphi_2(x_0,r^{-1})^{-1} \, r \, \frac{1}{r} \int_0^{r} \ln^k \Big(e+\frac{r}{t}\Big) \, \| \vec{f}\|_{L_{w}^{p}\big(B(x_0,t^{-1}),l_2\big)} \, w(B(x_0,t^{-1}))^{-\frac{1}{p}} \, \frac{dt}{t}
\\
& \lesssim \|b\|_{*} \, \sup_{x_0\in\Rn, r>0}\varphi_1(x_0,r^{-1})^{-1} w(B(x_0,r^{-1}))^{-\frac{1}{p}} \, \| \vec{f}\|_{L_{w}^{p}\big(B(x_0,r^{-1}),l_2\big)}
\\
& = \|b\|_{*} \, \sup_{x_0\in\Rn, r>0}\varphi_1(x_0,r)^{-1} w(B(x_0,r))^{-\frac{1}{p}} \, \big\|\vec{f}\big\|_{L^{p}_{w}(B(x_0,r),l_2)}
\\
& = \|b\|_{*} \, \|\vec{f}\|_{M^{p,\varphi_1}_{w}(l_2)}.
\end{align*}

By using the argument as similar as the above proofs and that of Theorem \ref{wuteo1.2}, we can also show the boundedness of $[b,\mathrm{g}_{\lambda,\alpha}^{*}]^k$.

\

\

\

Vagif S. Guliyev \\
Ahi Evran University, Department of Mathematics \\
Kirsehir, Turkey and \\
Institute of Mathematics  and Mechanics \\
Academy of Sciences of Azerbaijan \\
F. Agayev St. 9, Baku, AZ 1141, Azerbaijan

\

Mehriban N. Omarova \\
Baku State University \\
Baku, AZ 1148, Azerbaijan

\end{document}